\documentclass[12pt, reqno]{amsart}

\usepackage[margin = 1.4in] {geometry}

\usepackage{amsmath,amssymb,amsthm,verbatim}
\usepackage{hyperref}
\usepackage[dvipsnames]{xcolor}

\newtheorem{theorem}[subsection]{Theorem}
\newtheorem{lemma}[subsection]{Lemma}

\theoremstyle{definition}

\newtheorem{remark}[subsection]{Remark}

\newcommand{\haus}{\mathcal{H}}

\newcommand{\spt}{\mathrm{spt}}

\newcommand{\reg}{\mathrm{reg}}

\newcommand{\graph}{\mathrm{graph}}

\newcommand{\eps}{\epsilon}

\newcommand{\sing}{\mathrm{sing}}

\newcommand{\R}{\mathbb{R}}

\newcommand{\del}{\partial}

\newcommand{\bbS}{\mathbb{S}}

\newcommand{\cA}{\mathcal{A}}

\newcommand{\cL}{\mathcal{L}}

\newcommand{\<}{\langle}
\renewcommand{\>}{\rangle}

\begin{document}

\title[Bernstein-type theorem for capillary graphs]{A Bernstein-type theorem for capillary graphs in a half-space}

\author{Nick Edelen}
\address{Department of Mathematics, University of Notre Dame, Notre Dame, IN 46556 USA}
\email{nedelen@nd.edu}
\author{Chao Li}
\address{Courant Institute, New York University, 251 Mercer St, New York, NY 10012, USA}
\email{chaoli@nyu.edu}
\author{Jonathan J. Zhu}
\address{Department of Mathematics, University of Washington, Seattle, WA, USA}
\email{jonozhu@uw.edu}

\begin{abstract}
We show that any entire, capillary minimal graph in a half-space must be linear in low-dimensions or, more generally, when some tangent cone at infinity does not split off a vertical line.  We also show that the regular set of any entire, capillary-minimizing hypersurface must be connected, and we discuss connections with the one-phase Bernoulli problem.
\end{abstract}

\maketitle

In this article we consider entire, graphical, capillary-minimizing hypersurfaces in a half-space $\mathbb{R}^{n+1}_+ \equiv \{ x_1 > 0 \}$. In analogy to the classical Bernstein theorem for entire graphs \cite{BoDeGi}, we show the hypersurfaces must be planar in low dimensions or under a gradient bound assumption, and in general dimensions we show the dichotomy: if the surface is not planar, then every tangent cone at infinity splits off a vertical line.  We give a geometric analytic proof, without appealing to De Giorgi-Nash-Moser theory, which may be of independent interest. Our main theorem is:

\begin{theorem}\label{thm:main1}
Take $\theta \in (0, \pi)$, and let $u \in C^2(\R^n_+) \cap C^1(\overline{\R^n_+})$ be an entire solution to the minimal surface equation with capillary boundary data:
\begin{equation}\label{eqn:main1-hyp}
\sum_{i=1}^n D_i \left( \frac{ D_i u}{\sqrt{1+ |Du|^2}} \right) = 0 \text{ on } \R^n_+, \quad \frac{-D_1 u}{\sqrt{1+ |Du|^2}} = \cos\theta \text{ on } \del \R^n_+.
\end{equation}
If either $n \leq k_*(\theta)$ or $\sup |Du| < \infty$, then $u$ is linear.
\end{theorem}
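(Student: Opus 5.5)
Following the abstract, I would prove the dichotomy: if $u$ is not linear, then every tangent cone at infinity of $\graph(u)$ splits off a vertical line. Both cases of Theorem \ref{thm:main1} then come from ruling out such a splitting. Here $k_*(\theta)$ is the largest dimension in which every capillary-minimizing cone in a half-space with angle $\theta$ is flat.

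\textbf{Step 1: minimizing and blowing down.} Let $\nu$ be the upward unit normal of $M = \graph(u)$ and set $\omega = \<\nu, e_{n+1}\> = (1+|Du|^2)^{-1/2} > 0$. A calibration argument shows that $M$ is capillary-minimizing in $\R^{n+1}_+$. Take the vector field $X(x,t) = (-Du,1)/\sqrt{1+|Du|^2}$, extended to be independent of $t$; it is divergence-free, and the boundary condition gives $\<X, -e_1\> = \cos\theta$ on $\del\R^{n+1}_+$. This yields uniform area bounds, so the blow-downs $M/R_i$ subconverge to a capillary-minimizing cone $C$. By the monotonicity formula for capillary surfaces (Allard-type for the boundary, or the rescaled energy functional), $C$ is indeed a cone. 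The blow-down of a graph is a ``generalized graph'': it is a vertical cylinder over some region plus a graphical part, possibly with infinite values.

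\textbf{Step 2: Jacobi field and splitting.} The function $\omega$ is a positive solution of the Jacobi equation $\Delta_M \omega + |A|^2\omega = 0$. On the boundary it satisfies a Robin-type condition: differentiating the capillary condition tangentially shows that $\del_\eta \omega$ equals a boundary second-fundamental-form term times $\omega$. This condition is exactly the capillary stability boundary term, so $M$ is strictly stable. I would then use $\omega$, through a geometric argument in place of De Giorgi--Nash--Moser, to show the following. Either $\omega$ stays bounded below on $M$ at large scales, in which case $|Du|$ is bounded; or $\omega \to 0$ along a blow-down sequence, in which case $\nu$ becomes horizontal and $C$ is invariant under translation in $e_{n+1}$, i.e.\ $C = C' \times \R$. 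In the bounded-gradient case, the blow-down is a Lipschitz capillary minimal graph cone. To show it is flat, I would apply the same argument to its tangent cones away from the origin and induct via dimension reduction. The key point is that a cone which is a graph with bounded slope cannot split off a vertical line. Once $C$ is flat, Allard/capillary $\eps$-regularity gives smallness of $|A|$ at large scales, and scaling gives $A \equiv 0$. This is the standard route from flat blow-down to linear.

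\textbf{Step 3: concluding the two cases.} If $\sup|Du| < \infty$, the vertical splitting is impossible, so $C$ is a graphical cone. Its link is a capillary minimal graph over a hemisphere domain, and a Simons-type or maximum-principle argument with $\omega$ on the link should force $C$ to be a half-plane meeting the wall at angle $\theta$, hence $u$ is linear. If $n \le k_*(\theta)$, the cone $C' \subset \R^n_+$ is a capillary-minimizing cone of dimension $n-1 \le k_*(\theta)$, so it is flat; likewise any non-split $C$ is flat. It remains to treat $C$ equal to a vertical half-hyperplane. This is consistent with a non-flat graph only if the wall angle is $\pi/2$, or if the vertical plane is parallel to $e_1$. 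I would rule this out using the angle condition $\theta \neq \pi/2$ through the boundary trace of $\nu$, or else by a separate argument showing that a graph with a vertical-plane blow-down has $\omega$ with the wrong sign somewhere.

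The main obstacle is Step 2: showing that degeneration of $\omega$ forces a vertical splitting of every tangent cone. This needs a Harnack-type or strong maximum principle for the positive Jacobi field $\omega$ with the Robin boundary condition, applied on the limiting cone where $\omega$ becomes a nonnegative Jacobi field. I would first try passing $\omega/\omega(p_i)$ to the limit to get a positive Jacobi field on $C$. If that is not possible, $\omega \equiv 0$ on the regular part, and connectedness of $\reg C$ (which the paper claims) upgrades a local vertical tangency to a global translation invariance.
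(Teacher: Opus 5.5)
\textbf{The decisive step is missing.} The theorem rests on one lemma: a capillary-minimizing cone $C$ on which $\<\nu_C,e_{n+1}\>$ is nonnegative and not identically zero must be the capillary half-plane. You leave this to ``a Simons-type or maximum-principle argument with $\omega$ on the link'' plus dimension reduction, but this is exactly where the capillary boundary causes trouble.

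On a component $\Sigma$ of the link, $\Delta\omega+|A|^2\omega=0$ with $D_\eta\omega=q\,\omega$, where $q=\cot\theta\,A(\eta,\eta)$. Integrating therefore only gives $\int_\Sigma|A|^2\omega=-\int_{\del\Sigma}q\,\omega$, and the right side has no sign. A Simons-type classification of stable capillary cones is precisely what is open (it is what $k_*$ measures), so dimension reduction cannot supply the lemma.

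The idea you need is the auxiliary function $\phi=1+\cos\theta\,\<\nu,e_1\>$. It satisfies $\Delta\phi+|A|^2\phi=|A|^2$ together with the \emph{same} Robin condition $D_\eta\phi=q\phi$. Consequently $\mathrm{div}(\omega^2\nabla(\phi/\omega))=\omega|A|^2$, with zero conormal flux on $\del\Sigma$. Test this against $\psi^2\phi/\omega$ and use $1-|\cos\theta|\le\phi\le 2$; you get $\int_\Sigma|A|^2\psi^2\le c(\theta)\int_\Sigma|\nabla\psi|^2$, with $c$ independent of $\omega$. The singular set of the link has codimension at least $4$, so a capacity argument gives $|A|\equiv0$. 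Lemma \ref{lem:has-plane} then upgrades one flat component to the whole cone. This works for every tangent cone with $\<\nu_C,e_{n+1}\>\ge0$, singular or not, and needs no gradient bound.

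\textbf{Consequences for the rest of your argument.} Without this lemma, your Step 2 dichotomy is posed on the wrong object. Knowing ``$\omega\to0$ along a blow-down sequence'' does not make $\nu$ horizontal on the whole limit; that would require exactly the Harnack inequality you want to avoid. The correct dichotomy lives on each cone. The inclusion $E-te_{n+1}\subset E$ passes to limits, so $\<\nu_C,e_{n+1}\>\ge0$ is itself a nonnegative Jacobi field, and no normalization $\omega/\omega(p_i)$ is needed. By the strong maximum principle and the Hopf lemma it is either $>0$ or $\equiv0$ on each component of $\reg C$. In the first case the cone is flat by the lemma; in the second, $C=C'\times\R e_{n+1}$. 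Flatness at infinity then transfers to $E$ by monotonicity of $\Theta_V(0,r)$ at a boundary regular point, or by your $\eps$-regularity remark.

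The same gap affects your case $n\le k_*$. Use the paper's $k_*$, the smallest dimension of a singular minimizing capillary cone; your redefinition shifts it by one. A non-split $C$ has dimension $n$, which may equal $k_*$, so ``likewise any non-split $C$ is flat'' does not follow from the dimension bound. It is exactly the missing lemma, and it is what gains the extra dimension.

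Your concern about vertical half-hyperplanes is misplaced. Vertical capillary half-planes exist for every $\theta$ (take normal $\cos\theta\,e_1+\sin\theta\,e_2$). But a flat tangent cone at infinity forces $E$ itself to be that half-space, which is impossible for a subgraph. (Minor point: with your $X$ one has $\<X,-e_1\>=-\cos\theta$, not $\cos\theta$.)
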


Here $k_*(\theta)$ is the smallest dimension of any singular minimizing capillary hypercone of angle $\theta$.  The regularity theory of \cite{DePhilippisMaggi} implies that a capillary-minimzing hypersurface in $\R^{n+1}$ will be regular away from some singular set of dimension $\leq n-k_*(\theta)$.  From \cite{ChEdLi, FiTsWa} (see also \cite{WangZhang}), we know that $k_* \geq 4$ for general angles, and $k_* \in \{ 5, 6, 7\}$ for angles near $0, \pi$, and $k_* = 7$ for angles near $\pi/2$.  By analogy with minimal surface theory, one may expect $k_* = 7$ for all angles, but this seems to be a difficult open question.

Geometrically, the PDE \eqref{eqn:main1-hyp} asks for the graph of $u$ to be a minimal hypersurface in $\R^{n+1}_+$ that meets the hyperplane $\del \R^{n+1}_+$ at constant angle $\theta$. Using more methods from PDE, \cite{WangWeiZhang} previously proved a version of Theorem \ref{thm:main1}, although they required both a bound on $|Du|$ and a restriction on $\theta$ in terms of $n$. 

\begin{remark}
As in the interior Bernstein problem, the critical dimension of minimal graphs is one more than the critical dimension for minimizing surfaces, due to the vertical splitting of non-trivial tangent cones at infinity (see Theorem \ref{thm:main2}).  When $\theta = \pi/2$ the dimension bound of Theorem \ref{thm:main1} is optimal, because the minimal graphs over $\R^8$ as constructed by \cite{BoDeGi} have reflection symmetry $x_1 \mapsto -x_1$. It is likely that that one may construct similar graphs for other angles.
\end{remark}

The Bernstein-type Theorem \ref{thm:main1} follows directly from a more general Theorem \ref{thm:main2} about general capillary minimizers satisfying a weak graphicality-type condition. Indeed, the above notion of graphicality over a (half-)plane \emph{orthogonal} to the barrier will imply that the graph is a capillary minimizer; this does not seem to follow naturally for graphs over other planes, such as the barrier itself. Before stating our more general theorem, we require a little bit more notation and background.

Given $\theta \in (0, \pi)$ and a set $E \subset \R^{n+1}_+$ of locally-finite perimeter, and a bounded open set $U$, we define the capillary functional
\[
\cA^\theta_U(E) = \haus^n(\del^*E \cap \R^{n+1}_+ \cap U) - \cos\theta \, \haus^n(\del^*E \cap \del \R^{n+1}_+ \cap U).
\]
We say $E$ is a minimizer for $\cA^\theta$ if for any bounded open $U$, we have $\cA^\theta_U(E) \leq \cA^\theta_U(E')$ for any $E' \subset \R^{n+1}$ of locally-finite perimeter with symmetric difference $E \Delta E' \subset \subset U$.

If $M = \del^* E \cap \R^{n+1}_+$, then by \cite{ChEdLi} $\overline{M}$ is a smooth, properly embedded submanifold-with-boundary (meeting $\del \R^{n+1}_+$ at angle $\theta$) away from some closed singular set of Hausdorff dimension $\dim\sing M\leq n-4$.  If one defines the (signed) varifold
\[
V = |\del^*E \cap \R^{n+1}_+| - \cos\theta \, |\del^* E \cap \del \R^{n+1}_+|
\]
then by the stationarity condition for $\cA^\theta$, $V$ is stationary with free-boundary in $\del\R^{n+1}_+$, and hence by the monotonicity formula
\begin{align*}
\Theta_V(x, r) &= \frac{||V||(B_r(x))}{\omega_n r^n} \\
&\equiv \frac{\haus^n(\del^* E \cap \R^{n+1}_+ \cap B_r(x)) - \cos\theta \, \haus^n(\del^* E \cap \del \R^{n+1}_+ \cap B_r(x))}{\omega_n r^n}
\end{align*}
is increasing in $r$ for every $x \in \del \R^{n+1}_+$, and strictly increasing unless $V$ is a cone centered at $x$.

From monotonicity and compactness of minimizers of $\cA^\theta$ (\cite[Theorem 2.9]{DePhilippisMaggi}), one can always blow-up or blow-down $E$ to obtain a tangent cone $E'$ (at a point or at infinity, respectively).  Specifically, given any sequence $r_i \to \infty$, after passing to a subsequence we can find a dilation-invariant minimizer $E' \subset \R^{n+1}_+$ of $\cA^\theta$ so that the rescaled minimizers $E_i := E/r_i$ limit to $E'$ in the following sense: $E_i \to E'$ in $L^1_{loc}$, $[E_i] \to [E']$ as currents, $\del [E_i] \llcorner \R^{n+1}_+ \to \del [E'] \llcorner \R^{n+1}_+$ as both varifolds and currents, $\del [E_i] \to \del [E']$ as both varifolds and currents, and $\spt (\del [E_i] \llcorner \R^{n+1}_+) \to \spt (\del [E] \llcorner \R^{n+1}_+)$ in the local Hausdorff distance.  Note that the Hausdorff convergence implies that $\del^* E' \cap \R^{n+1}_+$ will be non-empty if $\del^*E \cap\R^{n+1}_+$ is.

We are now in a position to state our main geometric theorem.
\begin{theorem}\label{thm:main2}
Take $\theta \in (0, \pi)$, and let $E \subset \R^{n+1}_+$ be a set of locally-finite perimeter which minimizes the capillary energy $\cA^\theta$.  Write $\nu_E$ for the outward unit normal of $\del^* E$.

Suppose that $\<\nu_E , e_{n+1}\> \geq 0$ at $\haus^n$-a.e. $x \in \del^* E \cap \R^{n+1}_+$, and $\del^* E \cap \R^{n+1}_+ \neq \emptyset$.  Then we have the dichotomy: either $\del^* E \cap \R^{n+1}_+$ is a capillary plane and up to rigid motion
\begin{equation}\label{eqn:main2-concl}
[E] = [\{ x_1 \cos\theta + x_{n+1} \sin \theta < 0 , x_1 > 0 \}]; 
\end{equation}
or every tangent cone $E'$ of $E$ at infinity is non-planar (i.e. does not take the form \eqref{eqn:main2-concl} up to rotation), and splits off a vertical line (i.e. takes the form $[E'] = [E'' \times \R e_{n+1}]$).
\end{theorem}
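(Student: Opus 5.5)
The plan is to follow the classical De Giorgi route: first show that every tangent cone at infinity is either planar or vertical, then deal with the planar case by monotonicity.

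\textbf{Step 1 (the cones inherit graphicality).} Let $E'$ be a tangent cone at infinity, obtained along $r_i\to\infty$. The condition $\langle \nu_E, e_{n+1}\rangle \ge 0$ a.e.\ on $\partial^*E\cap\R^{n+1}_+$ says that the distributional derivative $D_{n+1}\chi_E$ restricted to $\R^{n+1}_+$ is a nonpositive measure, i.e.\ $E$ is monotone in the $e_{n+1}$ direction. Since $\chi_{E_i}\to\chi_{E'}$ in $L^1_{loc}$, this sign passes to the limit. Hence $\langle\nu_{E'},e_{n+1}\rangle\ge 0$ on $\partial^*E'\cap\R^{n+1}_+$.

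\textbf{Step 2 (a non-vertical cone is planar).} Work on the regular part $M'$ of $\partial^*E'\cap\R^{n+1}_+$.
\begin{itemize}
\item The function $w=\langle\nu_{E'},e_{n+1}\rangle\ge 0$ is a Jacobi field: $\Delta w+|A|^2w=0$.
\item On the free boundary it satisfies the capillary Robin condition $\partial_\eta w = q\,w$, where $q$ is the standard second-fundamental-form term. This holds because $e_{n+1}$ is tangent to $\partial\R^{n+1}_+$, so vertical translation preserves the capillary problem.
\item By the strong maximum principle and Hopf lemma on each connected component of $\mathrm{reg}$, either $w\equiv 0$ there or $w>0$ there.
\end{itemize}
If $w\equiv0$ on a component, that component is invariant under vertical translation. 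I then want to conclude that all of $E'$ splits as $E''\times\R e_{n+1}$. This needs connectedness of the regular set, which the abstract says is proved, presumably by a Frankel/maximum-principle argument valid since $\dim\sing\le n-4$.

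If $w>0$, then $M'$ carries a positive Jacobi field and is therefore strictly stable under the capillary second variation, including translations along $\partial\R^{n+1}_+$. Since $M'$ is a cone, $w$ is homogeneous of degree $0$. Combining this with a log-cutoff test function argument (the singular set has codimension $\ge 4$, so it has zero capacity) and the positivity of $w$, I would run a Simons-type / De Giorgi argument. The aim is to show that $w$ is constant on the link and then that $|A|\equiv0$, so that $E'$ is a capillary half-space.

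The cleaner alternative for this step is a Bombieri--De Giorgi--Miranda style argument: the cone is locally a Lipschitz graph over the horizontal direction, and a homogeneous degree-one solution with positive vertical component of the normal must be linear. I expect \textbf{Step 2 to be the main obstacle}, namely ruling out non-planar cones with $w>0$ everywhere, including the boundary behaviour and the behaviour near the singular set.

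\textbf{Step 3 (the dichotomy).} Suppose some tangent cone at infinity is planar. Then $\Theta_V(0,\infty)$ equals the density of a capillary plane, which is the minimal possible density among capillary minimizers with $\partial^*E\cap\R^{n+1}_+\ne\emptyset$. After translating $0$ to a point of $\overline{\partial^*E\cap\R^{n+1}_+}\cap\partial\R^{n+1}_+$ (or using interior monotonicity if no such point exists), monotonicity gives $\Theta_V(x,r)\ge\Theta_{\rm plane}$ for every $r$. Combined with the limit at infinity, $\Theta_V(x,\cdot)$ is constant. So $V$ is a cone at every such point, hence translation invariant along those directions, and therefore a capillary plane, giving \eqref{eqn:main2-concl}.

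Finally, a planar cone cannot also split off a vertical line unless it is the $\theta=\pi/2$ vertical plane. That case corresponds to $w\equiv0$, which is excluded by graphicality combined with the boundary angle condition and Step 2.
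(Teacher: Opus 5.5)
\textbf{The gap is Step 2.} Your Step 1 is correct, and it justifies something the paper leaves implicit: that tangent cones at infinity inherit $\langle \nu, e_{n+1}\rangle \ge 0$. Step 3 is essentially the paper's monotonicity argument. It is cleaner to center at a point of $\reg M\cap\del\R^{n+1}_+$, which always exists by Lemma \ref{lem:basic}, so no fallback is needed. There $\Theta_V(0,0^+)=(1-\cos\theta)/2$ exactly, and you avoid invoking a minimal-density claim you do not prove.

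Step 2 is the heart of the theorem, and you leave it as a list of candidate strategies, none of which addresses the real difficulty. A positive Jacobi field does not make the cone strictly stable. Passing to the compact link $\Sigma$ (where $w$ is $0$-homogeneous), it only says that the first eigenvalue of $-(\Delta+|A|^2)$ with Robin condition $D_\eta w=qw$ is $0$. Stability by itself cannot suffice: half of the Simons cone is a stable non-flat free-boundary cone for $\theta=\pi/2$. So you must use $\lambda_1=0$ sharply.

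The Simons-type way to do this is to test with $\psi\equiv 1$ on $\Sigma$. Here that produces the boundary integral $\int_{\del\Sigma} q$, and $q=\cot\theta\,A(\eta,\eta)$ has no definite sign, so nothing follows. Zero capacity of the singular set is the right auxiliary tool, but only once you have an inequality $\int_\Sigma|A|^2\psi^2\le C\int_\Sigma|\nabla\psi|^2$ on the link, which you never produce. The Bombieri--De Giorgi--Miranda alternative merely restates the claim, since $w>0$ gives no uniform gradient bound near $\sing M'$ or near the barrier.

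\textbf{The missing idea.} Replace the constant test function by $\phi=1+\cos\theta\,\langle\nu,e_1\rangle$. It satisfies $\Delta\phi+|A|^2\phi=|A|^2$ on $\Sigma$, the \emph{same} Robin condition $D_\eta\phi=q\phi$ on $\del\Sigma$, and $1-|\cos\theta|\le\phi\le 2$. Integrating by parts against $\log w$, the boundary terms cancel exactly and one obtains
$\int_\Sigma\psi^2|\nabla\phi-\phi\nabla\log w|^2=-\int_\Sigma(\psi^2|A|^2\phi+\phi\langle\nabla\phi-\phi\nabla\log w,\nabla\psi^2\rangle)$.
By Cauchy--Schwarz this gives $\int_\Sigma|A|^2\psi^2\le c(\theta)\int_\Sigma|\nabla\psi|^2$. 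In the smooth case this just says the second-variation form evaluated at $\phi$ equals $-\int_\Sigma|A|^2\phi$, which contradicts $\lambda_1=0$ unless $A\equiv 0$. Since $\Sigma$ is compact with $\dim(\overline{\Sigma}\setminus\Sigma)\le n-5$, cutoffs $\psi_i\to 1$ with $\int_\Sigma|\nabla\psi_i|^2\to 0$ then give $|A|\equiv 0$.

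\textbf{Finishing and corrections.} To conclude that $E'$ is the capillary half-space you still need Lemma \ref{lem:has-plane}: a cone with one planar regular component is planar. This also removes your need for connectedness, since it suffices that either some component has $w>0$ or $w\equiv 0$ on all of them. Drop your last paragraph, because its claim is false. For every $\theta$, $\{x_1\cos\theta+x_2\sin\theta<0,\ x_1>0\}$ is a planar capillary cone splitting off $\R e_{n+1}$. The dichotomy already follows from Step 3 (a planar tangent cone forces $E$ to be planar) combined with Step 2.
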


The basic idea behind Theorem \ref{thm:main2} is that for any tangent cone at infinity $E'$, the function $\<\nu_{E'}, e_{n+1}\>$ will be a non-negative $0$-homogenous Jacobi field on $M = \del^* E \cap \R^{n+1}_+$, which means that either $\<\nu_{E'} , e_{n+1}\> \equiv 0$ (in which case $E' = E'' \times \R e_{n+1}$) or the first eigenfunction of the Jacobi operator on the link $M \cap \del B_1$ is $= -(n-1)$, which can only occur if the curvature of $M$ vanishes.  In the latter case, when $E'$ is planar, monotonicity implies $E$ is planar also.

\vspace{3mm}

In Section \ref{sec:further} we make some additional remarks.  We first show how our techniques from Theorem \ref{thm:main2} can be applied to the one-phase Bernoulli problem.  We then prove the fact (indirectly related to our main theorems) that any capillary-minimizing hypersurface in $\R^{n+1}_+$ is connected, which implies (among other things) a Poincare-type inequality.  We lastly consider other graphicality-type conditions, such as when a capillary surface is graphical over the barrier plane $\del \R^{n+1}_+$, and how this condition interacts with graphicality over an orthogonal plane as in our main Theorems.

\vspace{3mm}

N.E. was supported by NSF grant DMS-2506700 and a Simons Foundation travel award.  
C.L. was partially supported by NSF grant DMS-2202343 and a Sloan Fellowship.
J.Z. was supported by NSF grant DMS-2439945. 

\section{Proofs of Main Theorems}\label{sec:proofs}

We work in $\R^{n+1}$, and write $\R^{n+1}_+ = \{ x_1 > 0 \}$, and unless otherwise stated identify $\R^n$ with the subspace $\R^{n+1} \cap \{ x_{n+1} = 0 \}$, $\R^n_+ = \R^{n+1}_+ \cap \{ x_{n+1} = 0 \}$.  We write $\bbS^n \equiv \del B_1^{n+1} \subset \R^{n+1}$ for the standard Euclidean $n$-sphere, and $\bbS^n_+ = \bbS^n \cap \R^{n+1}_+$.  If $\Sigma^k \subset \R^{n+1}$ is a submanifold with boundary, we will write $\del \Sigma$ for the submanifold boundary, and our convention will be that $\Sigma \supset \del \Sigma$ as subsets of $\R^{n+1}$.  We write $\haus^k$ for the $k$-dimensional Hausdorff measure.

We will use various notions and results from geometric measure theory.  Given $E \subset \R^{n+1}$ a set of locally-finite perimeter, we write $\del ^* E$ for the reduced boundary.  Given an oriented submanifold (possibly with boundary) $\Sigma^k$, we write $[\Sigma]$ for the induced current and $|\Sigma|$ for the induced varifold.  Given a current $T$, we write $|T|$ for the induced varifold, and $||T||$ for its mass measure.  Similarly, if $V$ is a varifold we write $||V||$ for its mass measure.  See e.g. \cite{simonGMT} for more background on geometric measure theory.

Take $E \subset \R^{n+1}_+$ minimizing $\cA^\theta$, and write $M = \del^* E \cap \R^{n+1}_+$.  We define $\reg M$ to be the set of points $x \in \overline{M}$ for which either: $x \in \R^{n+1}_+$ and $\overline{M} \cap B_r(x)$ is a smooth embedded submanifold for some $r > 0$; or $x \in \del \R^{n+1}_+$ and $\overline{M} \cap B_r(x)$ is a smooth embedded submanifold with boundary $x \in \del \reg M \subset \del \R^{n+1}_+$, whose outer conormal $\eta$ satisfies $\<\eta , -e_1\> \equiv \cos\theta$. We define $\sing M = \overline{M} \setminus \reg M$.

We first record some basic facts about entire minimizers.
\begin{lemma}\label{lem:basic}
Take $\theta \in (0, \pi)$, and let $E \subset \R^{n+1}_+$ be a set of locally-finite perimeter which minimizes $\cA^\theta$.  Write $M = \del^* E \cap \R^{n+1}_+$.  Then
\begin{enumerate}
\item$\dim(\sing M) \leq n-k_*(\theta) \leq n-4$; 
\item in $\R^{n+1} \setminus \sing M$, $\reg M$ is a properly embedded, oriented, submanifold with boundary $\del \reg M \subset \del \R^{n+1}_+$;
\item any connected component $M_1$ of $\reg M$ must have non-empty manifold boundary $\del M_1\subset \del \R^{n+1}_+$:
\item we have $\haus^n(\del^* E \cap B_r(x)) \leq c(n) r^n$ for any $x \in \R^{n+1}$, $r > 0$.
\end{enumerate}
\end{lemma}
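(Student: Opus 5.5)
Items (1) and (2) come from the existing regularity theory, combined with a standard dimension reduction. From \cite{DePhilippisMaggi} and \cite{ChEdLi}, $\overline{M}$ is a smooth, properly embedded hypersurface away from a closed set. At boundary points of the regular part, $\overline{M}$ meets $\del\R^{n+1}_+$ at angle $\theta$. For (1) I would run Federer's dimension reduction. By monotonicity (interior monotonicity for interior points, free-boundary monotonicity for points of $\del\R^{n+1}_+$) and compactness of minimizers \cite[Theorem 2.9]{DePhilippisMaggi}, tangent cones exist at every singular point and are minimizing cones. Interior singular points blow up to classical minimizing cones, whose singular set has dimension $\leq n-7 \leq n-k_*$. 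Boundary singular points blow up to capillary minimizing cones. Iterating the blow-ups, with upper semicontinuity of density and the splitting of translation-invariant directions, reduces to a cone $C\times\R^{n-k}$ in which $C$ is a singular minimizing capillary cone in dimension $k$. This forces $k \geq k_*$, and hence $\dim\sing M \leq n-k_*$; the bound $k_*\geq 4$ is \cite{ChEdLi}. Item (2) then follows directly from the definition of $\reg M$: the set $\sing M$ is closed, and away from it $\reg M$ is locally a smooth embedded manifold-with-boundary. It is proper because $\overline{M}$ is closed, and it is oriented by $\nu_E$.

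Item (4) is a comparison argument. Given $B_r(x)$, compare $E$ with $E' = E\setminus \overline{B_\rho(x)}$ (or with $E \cup (B_\rho(x)\cap\R^{n+1}_+)$) for a.e. $\rho\in(r,2r)$. Minimality gives
\[
\haus^n(M\cap B_\rho)-\cos\theta\,\haus^n(\del^*E\cap\del\R^{n+1}_+\cap B_\rho) \leq \haus^n(\del B_\rho\cap \R^{n+1}_+) + |\cos\theta|\,\haus^n(\del\R^{n+1}_+\cap B_\rho) \leq c(n)\rho^n.
\]
Here $|\cos\theta| < 1$, so the boundary term $\haus^n(\del^*E \cap \del\R^{n+1}_+\cap B_\rho)$ is trivially at most $\omega_n\rho^n$. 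Moving it to the right-hand side bounds $\haus^n(M\cap B_\rho)$, and hence bounds $\haus^n(\del^*E\cap B_r)\leq c(n)r^n$.

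Item (3) is where I expect the main difficulty. I would argue by contradiction: suppose a component $M_1$ of $\reg M$ has empty boundary, so $\overline{M_1}\subset \R^{n+1}_+\cup\sing M$ and $\overline{M_1}$ avoids $\del \R^{n+1}_+$ except along $\sing M$. Because $\sing M$ has codimension at least $2$ (indeed $\haus^{n-2}$-null), $\overline{M_1}$ is a stationary integral varifold (a cut-off argument handles the singular set), and it carries no boundary term, so it is stationary in all of $\R^{n+1}$. The key point is then a maximum-principle argument. Take the linear function $x_1$, which is nonnegative on $\overline{M_1}$, and minimize it over $\overline{M_1}$, or rather consider $\inf_{M_1} x_1 \geq 0$. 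If $\overline{M_1}$ touches $\del\R^{n+1}_+$, it can do so only at singular points. There the strong maximum principle for stationary varifolds (Solomon--White / Ilmanen) lying on one side of a hyperplane forces $M_1$ to contain a piece of $\del\R^{n+1}_+$, which is impossible. If instead $\inf x_1 = a>0$ is not attained, translate and blow down: the blow-down of $M_1$ is a stationary cone. The cleaner route I would try is to use that each component of $M$, separately, is also $\cA^\theta$-minimizing in its complement region. Its blow-down at infinity is then an area-minimizing cone contained in $\R^{n+1}_+$, hence a cone supported in a half-space. By the maximum principle, such a cone must be the hyperplane $\del\R^{n+1}_+$ itself, with multiplicity. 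This contradicts the capillary condition, because a compactness argument would show that the boundary part of the limit varifold has density weight $-\cos\theta$, not the full weight of $M_1$. Justifying the decomposition into separately minimizing components is the step I would work hardest on.
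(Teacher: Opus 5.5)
Items (2) and (4) are fine and agree with the paper; the paper also compares $E$ with $E\setminus B_r$. For (1) the paper simply cites the regularity theory. Note that your step $n-7\le n-k_*$ at interior points tacitly assumes $k_*(\theta)\le 7$, which the paper only records for $\theta$ near $0,\pi/2,\pi$. The real gap is in (3).

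You correctly show that $|M_1|$ is a stationary integral varifold in all of $\R^{n+1}$ supported in $\{x_1\ge 0\}$, and that is all you need. But you then route the contradiction through the claim that each component of $M$ is separately $\cA^\theta$-minimizing. You do not prove this, and it plays no role. Meanwhile the actual contradiction (``the boundary part of the limit varifold has weight $-\cos\theta$, not the full weight of $M_1$'') is never pinned down. Your maximum-principle case analysis is also incomplete and ultimately superfluous. If $\inf_{M_1}x_1=a>0$ is attained, Solomon--White only gives $\overline{M_1}=\{x_1=a\}$, a configuration no local argument rules out. The case where $\inf_{M_1}x_1=0$ is not attained is not treated at all.

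The missing step, which is what the paper does, runs as follows. By (4) and monotonicity, along some $r_i\to\infty$ the varifolds $|M_1/r_i|$ converge to a stationary cone supported in $\{x_1\ge 0\}$. Such a cone must be $m|\del\R^{n+1}_+|$ with $m\ge 1$, since the density at infinity is at least the density $1$ at a point of $M_1$. Passing to a further subsequence, $E/r_i\to E'$ with $|\del[E/r_i]\llcorner\R^{n+1}_+|\to|\del[E']\llcorner\R^{n+1}_+|$ as varifolds. This is part of the compactness statement recorded in the introduction, and it is exactly the input that forbids interior area from collapsing onto the barrier. Since $\|M_1/r_i\|\le\|\del[E/r_i]\llcorner\R^{n+1}_+\|$ as Radon measures, the limits satisfy $m\,\haus^n\llcorner\del\R^{n+1}_+\le\haus^n\llcorner(\del^*E'\cap\R^{n+1}_+)$. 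This is absurd, because the right-hand side gives no mass to $\del\R^{n+1}_+$. No decomposition into separately minimizing components and no maximum principle at touching points is needed.

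If you instead want to argue only with convergence of the signed varifolds $V_i\to V'$, as your ``weight $-\cos\theta$'' phrasing suggests, you must also control the weak limits of the trace measures $\haus^n\llcorner(\del^*E_i\cap\del\R^{n+1}_+)$, for example via mass convergence of $\del[E_i]$. Your sketch does not do this.
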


\begin{proof}
Item 1 follows from the regularity theory of \cite{DePhilippisMaggi, ChEdLi}, Item 2 follows from the definition of $\reg M$, and Item 4 follows from the comparison
\begin{align*}
\haus^n(\del^* E \cap B_r(x)) - \omega_n r^n &\leq \cA^\theta_{B_{2r}(x)}(E \setminus B_r(x)) - \cA^\theta_{B_{2r}(x)}(E) \\
&\leq (n+1)\omega_{n+1} r^n
\end{align*}
To see Item 3, suppose towards a contradiction $M_1$ has no boundary.  Then (since $\dim(\sing M) \leq n-4$) the varifold $|M_1|$ is a stationary integral varifold in $\R^{n+1}$ supported in $\R^{n+1}_+$, and hence every tangent cone of $|M_1|$ at infinity coincides with $|\del \R^{n+1}_+|$ with some multiplicity.  But now if $E/r_i \to E'$ is any tangent cone of $E$ at infinity (for $r_i \to \infty$), this violates the varifold convergence $|\del [E/r_i] \llcorner \R^{n+1}_+| \to |\del [E'] \llcorner \R^{n+1}_+|$.
\end{proof}

\begin{lemma}\label{lem:has-plane}
Take $\theta \in (0, \pi)$, let $E \subset \R^{n+1}_+$ be a conical (i.e. dilation-invariant) 
set of locally-finite perimeter which minimizes $\cA^\theta$.  Write $M = \del^* E \cap \del \R^{n+1}_+$.

Suppose $M \neq \emptyset$, and that $\overline{E}$ contains a half-plane $P$ (with $0 \in \del P \subset \del \R^{n+1}_+$).  Then $M$ is planar, and up to rotation
\begin{equation}\label{eqn:has-plane-concl}
[E] = [\{ x_1 \cos\theta + x_{n+1}\sin\theta < 0, x_1 > 0 \}].
\end{equation}
In particular, if some connected component of $\reg M$ is planar, then \eqref{eqn:has-plane-concl} holds up to rotation.
\end{lemma}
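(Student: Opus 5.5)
The plan is to reduce to a sweeping/maximum principle argument in the two-dimensional cross-section orthogonal to the edge $L := \del P$. Here $M$ should be read as $\del^*E \cap \R^{n+1}_+$. Choose coordinates so that $L = \{x_1 = 0, y = 0\}$, where $y$ is the remaining coordinate in $\del\R^{n+1}_+$ orthogonal to $L$ and transverse to $P$. Write $\phi \in [0,\pi]$ for the angle, measured in the $(x_1,y)$-plane, from the half-line $\{x_1 = 0, y<0\}$ of the barrier. For $\alpha \in [0,\pi]$ let $Q_\alpha$ be the half-plane $\{\phi = \alpha\}$ with edge $L$, and $W_\alpha = \{\phi < \alpha\}$ the corresponding wedge. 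Then $P = Q_{\alpha_0}$ for some $\alpha_0$, and the candidate conclusion \eqref{eqn:has-plane-concl} is (up to the reflection $y \mapsto -y$) exactly $[E] = [W_\theta]$.

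\textbf{Step 1: a one-sided configuration.} Since $P \subset \overline{E}$, $E$ is a cone, and $M \neq \emptyset$, I would consider
\[
\alpha_1 = \sup\{\alpha : W_\alpha \subset E \text{ up to null sets}\}
\]
(working on the side of $P$ where $E$ fills in near $P$, if any), or else the first angle at which $\overline{M}$ touches $Q_\alpha$ when sweeping from $P$ toward the barrier. By dilation invariance everything is determined on the link $\bbS^n_+$. The sweep stops at some $Q_{\alpha_1}$ with $\overline{M} \cap Q_{\alpha_1} \neq \emptyset$ and $M$ lying on one side of $Q_{\alpha_1}$. The touching happens at a point of the link, or along the edge $L \cap \bbS^n$ in the limiting case.

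\textbf{Step 2: maximum principle.}
\begin{itemize}
\item If the touching point is interior, so $\phi \in (0,\pi)$ and it lies in $\R^{n+1}_+$, the strong maximum principle for stationary varifolds with $\dim \sing M \leq n-4$ (Ilmanen/Wickramasekera type) gives that a connected component of $\reg M$ coincides with an open piece of $Q_{\alpha_1}$. By analytic continuation and the cone property, that whole component is $Q_{\alpha_1}$. By Lemma \ref{lem:basic}(3) it has boundary on $\del\R^{n+1}_+$, and the capillary condition forces $\alpha_1 = \theta$ (or $\pi-\theta$, depending on orientation).
\item If the touching occurs on the barrier, then because $\<\eta,-e_1\> = \cos\theta$ at $\del \reg M$, a Hopf-type boundary comparison with the capillary half-plane of angle $\theta$ through that boundary point gives the same rigidity.
\end{itemize}
To make the comparison legitimate I would sweep with capillary half-planes $Q_\theta + s\,e_y$ translated in $y$, rather than rotated half-planes. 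These are genuine capillary competitors, and translation-sweeping is compatible with the cone structure in the limit $s \to 0$.

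\textbf{Step 3: identify $E$.} Once one component of $\reg M$ equals the capillary half-plane $Q$, I would show $M = Q$. The density $\Theta_V(0,\cdot)$ is constant for a cone. It is at least the density contributed by $Q$ together with the barrier part $-\cos\theta\,\haus^n(\del^*E\cap\del\R^{n+1}_+)$. Any additional component would add mass, contradicting minimality via a cut-and-paste competitor: replace $E$ by $W_\theta$ inside a ball. This gives $[E] = [W_\theta]$ up to rotation.

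\textbf{Final claim.} If a component $M_1$ of $\reg M$ is planar, then since $M$ is a cone, $M_1$ lies in a hyperplane through $0$. By Lemma \ref{lem:basic}(3), $M_1$ has nonempty boundary on $\del\R^{n+1}_+$, so its closure is a half-plane $P$ with $0 \in \del P \subset \del\R^{n+1}_+$. Since $M_1 \subset \overline{E}$, the first part applies.

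The main obstacle I expect is Step 1/2 at the edge $L$, where the touching may occur only at the singular line through $0$. There neither the interior nor the boundary maximum principle applies directly. I would first try to handle this by blowing up at points of $L \setminus\{0\}$. This produces cones splitting off $L$, which reduce to the explicit classification of two-dimensional capillary cones in a quarter-plane; that classification forces the ray at angle $\theta$ and propagates back to $E$ by upper semicontinuity of density.
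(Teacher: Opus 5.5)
Your sweeping strategy matches the paper's: rotate half-planes about the edge $L=\del P$ until first contact, then apply the interior maximum principle or the Hopf lemma. But there is a genuine gap exactly where you expect it, namely contact that occurs only along $L\cap\bbS^n$, where $\overline{M}$ might be singular, and your proposed fix does not close it. Because $E$ is dilation-invariant only about $0$, a tangent cone at $x\in L\setminus\{0\}$ splits off just the line $\R x$, not all of $L$ (these agree only when $n=2$). So reaching a two-dimensional cross-section needs repeated blow-ups. Worse, nothing learned about an iterated tangent cone comes back up. Upper semicontinuity of density only gives that $\Theta(E,x)$ is \emph{at least} the density of an iterated tangent cone, which is the wrong inequality for proving that $x$ is regular.

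The missing idea is induction on $n$, with the lemma itself as the inductive statement. Every tangent cone of $E$ at $x\in\overline{M}\cap L\setminus\{0\}$ has the form $E''\times\R x$, where $E''$ is a conical $\cA^\theta$-minimizer in $(\R x)^\perp\cap\R^{n+1}_+$ with nonempty interior boundary and with $\overline{E''}\supset P\cap(\R x)^\perp$, a half-plane whose edge passes through $0$. The lemma in one dimension lower (base case $n=1$, where the capillary ray is the only minimizer up to reflection) shows this tangent cone is the capillary plane, and $\eps$-regularity then gives $x\in\reg M$. Since $\sing M$ is closed, $\overline{M}=\reg M$ in a neighbourhood of $L\cap\bbS^n$, and the edge case disappears. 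First contact is then either interior (Solomon--White), or at a regular barrier point, where the angle condition together with maximality forces the sweep angle to be $\theta$ and the Hopf lemma applies. No translation sweep is needed; for a cone, a sweep by $Q_\theta+s e_y$ can only make contact at $s=0$, i.e.\ along $L$, so it would not help anyway.

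Step 3 is also unjustified. Replacing $E$ by $W_\theta$ inside $B_1$ is an admissible comparison only if you pay the extra $\haus^n(\del B_1\cap(E\Delta W_\theta))$ on the sphere. Moreover, a density above $(1-\cos\theta)/2$ is not by itself incompatible with minimality, so no contradiction follows. The paper instead uses the edge regularity a second time. It gives $\reg M=M_1=P_\theta$ near $L\cap\bbS^n$, so any other component $M_2$ lies in one of the two components of $\overline{\R^{n+1}_+}\setminus P_\theta$ and stays away from $L$ on the link. Now rotate $P_{\theta'}$ from $\theta$ towards $\pi$ (or towards $0$) on that side. The first contact with $\overline{M_2}$ must be at a point of $\R^{n+1}_+$, so the strong maximum principle gives $M_2\subset P_{\theta'}$. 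But then $\overline{M_2}$ reaches $L$, where $M_1$ is the only piece of $\reg M$, a contradiction. Your final deduction of the ``in particular'' statement from Lemma~\ref{lem:basic}(3) is fine.
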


\begin{proof}
The Lemma trivially holds for $n = 1$, as up to reflection $x_2 \mapsto -x_2$ the only minimizer is \eqref{eqn:has-plane-concl}. Proceeding by induction, we now suppose that the lemma holds with $n-1$ in place of $n$.  After rotation we can assume
\[
P = P_\phi := \left\{  \begin{array}{l l}  \{ x_{n+1} \leq 0, x_1 = 0 \} & \text{if } \phi = 0 \\ \{ x_1 \cos\phi + x_{n+1} \sin \phi = 0, x_1 \geq 0 \} & \text{if }\phi \in (0, \pi) \\ \{ x_{n+1} \geq 0, x_1 = 0 \} & \text{if } \phi = \pi \end{array} \right.
\]
for some $\phi \in [0, \pi]$.

Suppose $x \in \overline{M} \cap \del P \setminus \{0\}$.  If $E'$ is any tangent cone of $E$ at $x$, then we can write $E' = [E'' \times \R x]$ and $\overline{E''}$ contains the half-plane $P \cap (\R x)^\perp$.  Therefore by our inductive hypothesis $E'$ takes the form \eqref{eqn:has-plane-concl} up to rotation, and hence $x \in \reg M$.  We deduce that $\overline{M} = \reg M$ is regular in some neighborhood of $\del P \cap \bbS^n$.

We claim that (up to reflection $x_{n+1} \mapsto -x_{n+1}$) $P_\theta$ coincides with some connected component $M_1$ of $\reg M$ away from $\sing M$.  Let $\phi'$ be the largest angle $\in [\phi, \pi]$ for which $P_{\phi'} \subset \overline{E}$.  Since $M$ is non-empty, after reflection in $x_{n+1}$ we can assume that $\phi' < \pi$.  Now since $M$ is conical we must have $P_{\phi'} \cap \overline{M} \setminus \{0\} \neq \emptyset$.  If for some $x \in P_{\phi'} \cap \overline{M} \setminus \{0\}$ we have $x_1 > 0$, then by the interior maximum principle\footnote{Since $\del [E]$ is mass-minimizing in $\R^{n+1}_+$, one could alternately use Allard regularity to say $\del [E]$ must be regular at the touching point, and then apply the standard maximum principle for second order elliptic PDE.} \cite{SoWh} we have $M_1 \subset P_{\phi'}$ for some connected component $M_1$ of $\reg M$.  In fact by Allard regularity and the previous paragraph we must have $P_{\phi'} \setminus \{0 \} = \overline{M} \cap P_{\phi'} \setminus \{0\} = M_1\setminus \{0\}$, so by maximality of $\phi'$ the set $\overline{E}$ must meet $P_{\phi'}$ from the negative $x_{n+1}$-direction, and we get $\phi' = \theta$. 
On the other hand, if every $x \in P_{\phi'} \cap \overline{M} \setminus \{0 \}$ satisfies $x_1 = 0$, then by the previous paragraph every such $x \in \reg M$, so $\phi' = \theta$ by maximality of $\phi'$, and we can apply the Hopf lemma to get $M_1 \subset P_\theta$.

The lemma will follow if we can show that $\reg M = M_1$.  Suppose, towards a contradiction, there is some connected component $M_2$ of $\reg M \setminus M_1$.  By repeating the same argument as in our second paragraph, we have that $\reg M = M_1 = P$ in a neighborhood of $\del P \cap \bbS^n$, so $M_2$ is disjoint from $P$ near $\del P \cap \bbS^n$.

$M_2$ lives in one of the components of $\overline{\R^{n+1}_+} \setminus P_\theta$.  Let $\theta'$ be the largest angle $\in [\theta, \pi]$ for which 
\[
M_2 \subset \{ x_1 \cos\theta' + x_{n+1} \sin\theta' > 0, x_1 \geq 0 \}.
\]
If $\theta' < \pi$, then $\overline{M_2}$ touches $P_{\theta'}$, lies to one side of $P_{\theta'}$, and since $M_2$ is disjoint from $P$ near $\del P \cap \del B_1$ we have $\overline{M_2} \cap P_{\theta'} \setminus \{0\} \subset \R^{n+1}_+$.  Therefore by \cite{SoWh} (or our footnote argument) we deduce $M_2 \subset P_{\theta'}$.  However this implies $\del M_1 \cap \del M_2 \neq \emptyset$, which violates the fact that $M_1, M_2$ are different connected components.  So we must have $\theta' = \pi$.

By a similar argument, if $\theta''$ is the smallest angle in $[0, \theta]$ for which
\[
M_2 \subset \{ x_1 \cos\theta'' + x_{n+1} \sin\theta'' < 0, x_1 \geq 0 \}.
\]
then we must have $\theta'' = 0$.  We deduce that $M_2 = \emptyset$.
\end{proof}

\begin{lemma}\label{lem:cones}
Take $\theta \in (0, \pi)$, let $E \subset \R^{n+1}_+$ be a conical set of locally-finite perimeter which minimizes $\cA^\theta$, and write $\nu_E$ for the outward unit normal of $\del^* E$.  Write $M = \del^* E \cap \R^{n+1}_+$ and suppose $M \neq \emptyset$. 

Suppose that $\<\nu_E , e_{n+1}\> \geq 0$ at $\haus^n$-a.e. $x \in M$.  Then either $[E] = [E' \times \R e_{n+1}]$, or $M$ is planar and up to a rigid motion we can write
\begin{equation}\label{eqn:cones-concl}
[E] = [\{ x_1 \cos\theta + x_{n+1} \sin \theta < 0 , x_1 > 0 \}].
\end{equation}
\end{lemma}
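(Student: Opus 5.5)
On $\reg M$ I set $u = \<\nu_E, e_{n+1}\>$ and use it as a Jacobi field. Since $e_{n+1}$ is tangent to $\del \R^{n+1}_+$, vertical translations preserve the capillary problem, so $u$ solves the Jacobi equation
\[
\Delta_M u + |A_M|^2 u = 0
\]
on $\reg M$, with the capillary Robin condition on $\del \reg M$:
\[
\nabla_\eta u = q\, u, \qquad q = \cot\theta \, A_M(\eta,\eta)
\]
(the barrier is flat, so only this curvature term appears). Because $E$ is conical, $u$ is $0$-homogeneous. By hypothesis $u \geq 0$ on $\reg M$. I then fix a connected component $M_1$ of $\reg M$, which by Lemma \ref{lem:basic} has nonempty boundary on $\del\R^{n+1}_+$. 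By the strong maximum principle and the Hopf lemma for the Robin problem, either $u \equiv 0$ on $M_1$ or $u > 0$ on all of $M_1$, boundary included.

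\textbf{Case $u > 0$ on some component $M_1$.} Restrict to the link $\Sigma = M_1 \cap \bbS^n$. A $0$-homogeneous Jacobi field satisfies
\[
\Delta_\Sigma u + |A|^2 u = 0
\]
on $\Sigma$, with the same Robin condition on $\del\Sigma$. So $u$ is a positive solution of $L_\Sigma u = 0$, where $L_\Sigma = \Delta_\Sigma + |A|^2$. Using the conical structure, the full Jacobi operator on the cone decomposes as $\del_r^2 + \frac{n-1}{r}\del_r + r^{-2}L_\Sigma$, and I compare $u$ against the coordinate functions. The key tool is a Barta/Picone-type identity on the link: positive functions $\varphi$ satisfying the Robin condition and vanishing appropriately near $\sing \Sigma$ should obey
\[
\int_\Sigma |\nabla \varphi|^2 - |A|^2\varphi^2 + \int_{\del\Sigma} q\,\varphi^2 \;\geq\; 0 .
\]
Plugging $\varphi = |A|$ (or a suitable cutoff of $|A|$) and using Simons' identity for cones, $\Delta_\Sigma |A| + |A|^3 \geq (2-n)\,|A|/1 + \ldots$, in the form of the curvature estimates used by Simons, the left side becomes strictly negative unless $|A| \equiv 0$. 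This is the step I expect to be hardest. Two difficulties have to be handled: the boundary term coming from the capillary condition, where one needs $\nabla_\eta |A|$ to combine with $q|A|^2$ so the boundary terms cancel or have a good sign, and the singular set. For the singular set, $\dim \sing M \leq n-4$ gives $\dim \sing\Sigma \leq n-5$, enough for capacity cutoffs with $\int |A|^2 < \infty$ locally. A cleaner route I would try first is to integrate the identity $L(u)=0$ against $\varphi^2/u$ on the cone itself. Since $u$ is $0$-homogeneous, positivity of $u$ gives stability of the cone in the vertical direction. Testing with $\varphi = |A| r^{-\alpha}$, as in Simons/Schoen–Simon–Yau, forces $|A| \equiv 0$ on $M_1$, provided the boundary terms are controlled via the capillary boundary identity for $\nabla_\eta |A|^2$.

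Once $|A| \equiv 0$ on $M_1$, the component $M_1$ is a planar half-plane through $0$ meeting the barrier at angle $\theta$. Hence $\overline{E}$ contains it, and Lemma \ref{lem:has-plane} (last sentence) gives \eqref{eqn:cones-concl} up to rotation.

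\textbf{Case $u \equiv 0$ on every component.} Then $\nu_E \perp e_{n+1}$ $\haus^n$-a.e. on $M$. Also $\del^*E \cap \del\R^{n+1}_+$ is determined by the trace of $E$ and is bounded by $\overline{M} \cap \del\R^{n+1}_+$. Hence the distributional derivative $D_{n+1}\chi_E$ vanishes in $\R^{n+1}_+$, so $E$ is invariant under vertical translations up to null sets. This gives $[E] = [E' \times \R e_{n+1}]$.
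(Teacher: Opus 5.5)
Your framework is the same as the paper's: the $0$-homogeneous Jacobi field $f=\<\nu_E,e_{n+1}\>$ with Robin condition $D_\eta f=qf$, the maximum principle/Hopf dichotomy on each component, Lemma \ref{lem:has-plane} once a component is flat, and translation invariance when $f\equiv 0$. However, the decisive step, $f>0\Rightarrow |A|\equiv 0$, is left open, and the Simons-type route you sketch does not close.

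Positivity of $f$ gives $\int_\Sigma(|\nabla\varphi|^2-|A|^2\varphi^2)-\int_{\del\Sigma}q\varphi^2\ge 0$ (up to your sign convention for $q$). Testing with $\varphi=|A|$ then produces the boundary integral $\int_{\del\Sigma}\bigl(\tfrac12 D_\eta|A|^2-q|A|^2\bigr)$. To compute it, use $A(\eta,v)=0$ for $v\in T\del\Sigma$, Codazzi, $H=0$, and the fact that the second fundamental form of $\del\Sigma\subset\Sigma$ is $\pm\cot\theta\,A|_{T\del\Sigma}$. The integrand turns out to be $\cot\theta$ times a homogeneous cubic in the boundary principal curvatures. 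It therefore has no pointwise sign when $\theta\neq\pi/2$, and you give no mechanism to control it. Your alternative on the full cone with $\varphi=|A|r^{-\alpha}$ meets exactly the same boundary term.

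There are two further problems with this route:
\begin{itemize}
\item The Simons inequality on the link has the opposite sign to what you wrote: it is $\Delta_\Sigma|A|+|A|^3\ge (n-1)|A|$ where $|A|>0$. With $(2-n)$ you could not conclude even in the interior.
\item Using $|A|$ as a test function needs $|A|\in W^{1,2}$, $|A|^4\in L^1$, and $\int|A|^2|\nabla\psi_i|^2\to 0$ near $\sing\Sigma$. None of these follows from $\dim\sing\Sigma\le n-5$.
\end{itemize}

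The missing idea is to compare $f$ with a function obeying the \emph{same} Robin condition, so that boundary terms cancel rather than needing a sign. The paper uses $\phi=1+\cos\theta\<\nu,e_1\>$, which satisfies $\Delta\phi+|A|^2\phi=|A|^2$ on $\Sigma$ and $D_\eta\phi=q\phi$ on $\del\Sigma$. Integrate $\psi^2\<\nabla\phi^2,\nabla\log f\>$ and $\psi^2|\nabla\phi|^2$ by parts and subtract. The boundary terms $\int_{\del\Sigma}q\psi^2\phi^2$ cancel exactly, leaving $\int\psi^2|\nabla\phi-\phi\nabla\log f|^2=-\int\psi^2|A|^2\phi-\int\phi\<\nabla\phi-\phi\nabla\log f,\nabla\psi^2\>$. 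Since $1-|\cos\theta|\le\phi\le 2$, Cauchy--Schwarz gives $\int_\Sigma|A|^2\psi^2\le c(\theta)\int_\Sigma|\nabla\psi|^2$ for every $\psi$ compactly supported in $\Sigma$ (allowed to meet $\del\Sigma$). The right-hand side involves only $|\nabla\psi|^2$, so no curvature integrability is needed. The singular set has codimension $\ge 4$ in the compact link, which together with the mass bound gives zero capacity. Hence there are cutoffs $\psi_i\to 1$ with $\int|\nabla\psi_i|^2\to 0$, and $|A|\equiv 0$ follows.
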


\begin{proof}
By replacing $E$ with $\R^{n+1}_+ \setminus \overline{E}$ there is no loss in assuming $\theta \in (0, \pi/2]$.  Let $\Sigma$ be any connected component of $\reg M \cap \bbS^n$, so that $\Sigma$ is a smooth, connected, oriented, $(n-1)$-manifold in $\bbS^n$ with (non-empty) manifold boundary $\del \Sigma \subset \del \bbS^n_+$, and satisfies
\begin{equation}\label{eqn:cones-1}
\dim(\overline{\Sigma} \setminus \Sigma) \leq n-5, \quad \haus^{n-1}(\Sigma \cap B_r(x)) \leq c(n) r^{n-1} \quad \forall x, r < 1.
\end{equation}

As translation in the $e_{n+1}$ direction is a symmetry of the capillary problem, the function $f = \<\nu_E , e_{n+1}\>$ is a $0$-homogenous Jacobi field on $\reg M$, and hence solves
\[
\Delta f + |A|^2 f = 0 \text{ on } \Sigma, \quad D_\eta f = q f \text{ on } \del\Sigma
\]
where we write $q = \cot\theta A(\eta, \eta)$, and $A(v, w) \equiv A_\Sigma(v, w) = -\< \nu, D_v w \>$ for the scalar-valued second fundamental form of $\Sigma$.

Since $f \geq 0$ by assumption and $\Sigma$ is connected, the interior maximum principle and Hopf lemma imply either $f > 0$ on $\Sigma$ or $f \equiv 0$ on $\Sigma$.  Suppose $f > 0$.  By direct computation (see e.g. \cite[Theorem 1.10]{LiZhouZhu}) the function $\phi = 1 + \cos\theta\, \<\nu , e_1\>$ solves the Jacobi-like identity:
\[
\Delta \phi + |A|^2 \phi = |A|^2 \text{ on } \Sigma, \quad D_\eta \phi = q \phi \text{ on } \del\Sigma.
\]
Take $\psi$ any smooth, compactly supported function on $\Sigma$.  By integration by parts we have the identities
\[
\int_{\Sigma} \psi^2 \<\nabla \phi^2 , \nabla \log f\> = \int_\Sigma \psi^2 (|A|^2 + |\nabla \log f|^2) \phi^2 + \int_{\del\Sigma} q \psi^2 \phi^2  - \int_\Sigma \phi^2 \<\nabla \log f , \nabla \psi^2\> ,
\]
and
\[
\int_\Sigma \psi^2 |\nabla \phi|^2 = \int_\Sigma \psi^2(|A|^2 \phi^2 - |A|^2 \phi) + \int_{\del\Sigma} q \psi^2 \phi^2 - \int_\Sigma \phi \<\nabla \phi , \nabla \psi^2\>.
\]
Subtracting the two we obtain
\[
\int_\Sigma \psi^2 |\nabla \phi - \phi \nabla \log f|^2 = -\int_\Sigma \psi^2 |A|^2 \phi - \phi \<\nabla \phi - \phi \nabla \log f,  \nabla \psi^2\> ,
\]
and hence by Cauchy-Schwarz, and using the fact that $1-|\cos\theta| \leq \phi \leq 2$, we deduce
\[
\int_\Sigma |A|^2 \psi^2 \leq c(\theta)\int_\Sigma |\nabla \psi|^2,
\]
where, for instance, we may take $c(\theta)=\frac{16}{1-|\cos\theta|}$.

From the dimension and mass bounds \eqref{eqn:cones-1}, we can use a basic cutoff argument (see e.g. \cite{ss}) to find $\psi_i \in C^1_c(\Sigma)$ so that $\psi_i \to 1$ uniformly on compact subsets of $\Sigma$, and $\int_\Sigma |\nabla \psi_i|^2 \to 0$, and thereby deduce that $|A| \equiv 0$ on $\Sigma$.  So the connected component of $\reg M$ containing $\Sigma$ is planar in $\R^{n+1}_+$, and from Lemma \ref{lem:has-plane} we get that, up to rotation, $E$ must take the form \eqref{eqn:cones-concl}.

If, on the other hand, $f \equiv 0$ on every connected component $\Sigma$ of $\reg M \cap \bbS^n$, then we have $\<\nu_E , e_{n+1}\> = 0$ at $\haus^n$-a.e. $x \in \del^*E$, which implies we can decompose $[E] = [E' \times \R e_{n+1}]$.
\end{proof}

\begin{proof}[Proof of Theorem \ref{thm:main2}]
By translating $E$, we may assume without loss of generality that $0 \in \reg M$ where $M = \del^* E \cap \R^{n+1}_+$ (from Lemma \ref{lem:basic}, we know that $\reg M \cap \del \R^{n+1}_+ \neq \emptyset$).  Let $E'$ be any tangent cone of $E$ at infinity, obtained as the limit $E/r_i \to E'$.

Suppose $E'$ is planar, in the sense that \eqref{eqn:cones-concl} holds after a suitable rotation.  Define the (signed) varifold
\[
V = |\del^* E \cap \R^{n+1}_+| - \cos\theta |\del^* E \cap \del \R^{n+1}_+| ,
\]
then as in the introduction $V$ is a stationary free-boundary varifold in $\R^{n+1}_+$, and so the density $\Theta_V(0, r)$ is increasing for all $r > 0$.

Since $0 \in \reg M$ we have that $\lim_{r \to 0} \Theta_V(0, r) = (1-\cos\theta)/2$.  On the other hand, from the nature of convergence $E/r_i \to E$, the rescaled varifolds converge as 
\[
(1/r_i)_\sharp V \to V' = |\del^* E' \cap \R^{n+1}_+| - \cos\theta |\del^* E' \cap \del \R^{n+1}_+|.
\]
By our planarity assumption on $E'$, this implies $\lim_{r \to \infty} \Theta_V(0, r) = (1-\cos\theta)/2$ as well. By the monotonicity formula we deduce that $V$ is dilation-invariant, so $E$ is dilation-invariant, and hence $E = E'$ is planar. The proof of Theorem \ref{thm:main2} is completed by Lemma \ref{lem:cones}.
\end{proof}

\begin{proof}[Proof Theorem \ref{thm:main1}]
Let $\nu(x_1, \ldots, x_n)$ be the upwards unit normal of the graph of $u$ at $(x_1, \ldots, x_n)$.  Define the vector field $X(x_1, \ldots, x_{n+1}) = \nu(x_1, \ldots, x_n)$, and then $X$ is a $C^1$ vector field on $\R^{n+1}_+$ (extending continuously to $\del \R^{n+1}_+$) satisfying
\begin{equation}\label{eqn:main1-1}
|X| = 1, \quad \mathrm{div}(X) = 0 \text{ on } \R^{n+1}_+ , \quad \<X , e_1\> =  \cos\theta \text{ on } \del \R^{n+1}_+.
\end{equation}

Define $E = \{ (x_1, \ldots, x_n) : x_{n+1} < u(x_1, \ldots, x_n), x_1 > 0 \}$, and write $\nu_E$ for the outward unit normal of $E$.  If $E' \subset \R^{n+1}_+$ is any other set of locally-finite perimeter satsifying $E' \Delta E \subset\subset U$ for some precompact $U$, then using \eqref{eqn:main1-1}, the divergence theorem, and the fact that $\nu_{E'} = -e_1$ at $\haus^n$-a.e. $x \in \del^*E' \cap \del \R^{n+1}_+$ (and likewise for $E$), we can compute
\begin{align*}
\cA^\theta_U(E)
&= \int_{\del^* E \cap U} \<X , \nu_E d\haus^n\> \\
&= \int_{\del^* E' \cap U} \<X , \nu_{E'}\> d\haus^n + \int_{E \setminus E'} \mathrm{div}(X) d\haus^{n+1} - \int_{E' \setminus E} \mathrm{div}(X) d\haus^{n+1} \\
&= \int_{\del^* E' \cap \R^{n+1}_+ \cap U} \<X , \nu_{E'}\> d\haus^n - \cos\theta \haus^n(\del^* E' \cap \del \R^{n+1}_+ \cap U) \\
&\leq \cA^\theta_U(E').
\end{align*}
We deduce that $E$ is a local minimizer of $\cA^\theta$.

Trivially $\<\nu_E , e_{n+1}\> > 0$ on $\del E \cap \R^{n+1}_+ \equiv \del^* E \cap \R^{n+1}_+$, and so by Theorem \ref{thm:main2} we have that either $\del E \cap \R^{n+1}_+$ is planar, or any tangent cone $E'$ of $E$ at is infinity is non-planar and splits off a line as $[E'] = [E'' \times \R e_{n+1}]$.  In the former case we are done.

Suppose the latter alternative occurs.  If $n \leq k_*(\theta)$, then we note that $E''$ must be a conical minimizer of $\cA^\theta$ in $\R^n_+$, and so must be planar by our choice of $k_*(\theta)$, and hence $E'$ must also be planar, which is a contradiction.  If $\sup |Du| < \infty$, then $\<\nu_E , e_{n+1}\>\geq \eps > 0$ for some fixed $\eps > 0$, and hence $\<\nu_{E'} , e_{n+1}\> \geq \eps$ at $\haus^n$-a.e. $x \in \del^* E' \cap \R^{n+1}_+$, which means $E'$ cannot split off a vertical line, and we get another contradiction.
\end{proof}

\section{Further Remarks}\label{sec:further}

\subsection{One-phase Bernoulli}

The same techniques in our paper can apply to the one-phase Bernoulli problem, that is $u : \R^n \to \R$ non-negative solving (in a suitable sense) the problem
\[
\Delta u = 0 \text{ on } \{ u > 0 \}, \quad |D u| = 1 \text{ on } \del \{ u > 0 \},
\]
which can be thought of as the zero-angle limit of the capillary problem (see e.g. \cite{ChEdLi} and the references therein). 
Previously, \cite{EnFeYu} have proven a Bernstein-type theorem for entire viscosity solutions of one-phase Bernoulli whose free-boundary is a continuous graph, showing that in low dimensions the only such solutions will be linear.

Here we show an analog of the dichotomy of Theorem \ref{thm:main2} for minimizers of the one-phase problem in any dimension.  A function $u \in W^{1,2}_{loc}(\R^n)$ is called a minimizer of the one-phase problem if it minimizes on compact subsets the Alt-Caffarelli functional
\[
J(u) = \int_{\{ u > 0 \}} \left(|Du|^2 + 1\right) dx.
\]
Let us write $k_*(0)$ for the smallest dimension in which a non-linear $1$-homogeneous minimizer of the Alt-Caffarelli functional exists; from \cite{JeSa, DeJe} we know that $k_*(0) \in \{ 5, 6, 7 \}$.

\begin{theorem}\label{thm:bernoulli}
Let $u : \R^n \to \R$ be an entire, non-zero minimizer of the Alt-Caffarelli functional, and suppose that $D_n u \geq 0$ on $\{ u > 0 \}$.  Then either $u$ is linear, or every tangent cone at infinity is non-linear and is invariant in the $e_n$ direction.  In particular, if $n \leq k_*(0)$ then $u$ is linear.
\end{theorem}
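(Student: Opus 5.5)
The proof follows the same three-step scheme as Theorem \ref{thm:main2}: blow down to a $1$-homogeneous minimizer, prove a cone version of Lemma \ref{lem:cones} for the Alt--Caffarelli functional, and then use the Weiss monotonicity formula in place of the capillary density.

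\textbf{Step 1: blow-down.} Take $u$ an entire non-zero minimizer, and translate so that $0 \in \del\{u>0\}$ is a regular free-boundary point. Minimizers are Lipschitz with a universal bound, so the rescalings $u_r(x) = u(rx)/r$ are uniformly Lipschitz. Along $r_i\to\infty$ they converge locally uniformly, and strongly in $W^{1,2}_{loc}$, to a minimizer $u_\infty$; this uses the standard Alt--Caffarelli compactness, including Hausdorff convergence of the free boundaries. The Weiss energy
\[
W(r) = r^{-n}J(u; B_r) - r^{-n-1}\int_{\del B_r} u^2
\]
is monotone in $r$, and constant exactly when $u$ is $1$-homogeneous. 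Hence $u_\infty$ is $1$-homogeneous. It is non-zero by nondegeneracy, since $0$ lies on the free boundary. The hypothesis $D_n u \ge 0$ passes to the limit, giving $D_n u_\infty \ge 0$ a.e.

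\textbf{Step 2: cone version of Lemma \ref{lem:cones}.} Let $v$ be a $1$-homogeneous minimizer with $f = D_n v \ge 0$. On the regular part, $f$ satisfies
\[
\Delta f = 0 \text{ in } \{v>0\}, \qquad D_\nu f = H f \text{ on } \reg\del\{v>0\},
\]
where $H$ is the mean curvature of the free boundary, with a suitable sign convention. This holds because $e_n$-translation is a symmetry of the problem. The analogue of $\phi$ in Lemma \ref{lem:cones} is $\phi = |Dv|^2$, or equivalently $|Dv|$, which equals $1$ on the free boundary. By Bochner, $\Delta |Dv|^2 = 2|D^2v|^2$, and the free-boundary condition gives $D_\nu |Dv|^2 = 2H|Dv|^2$. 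These are the identities that replace $\Delta\phi + |A|^2\phi = |A|^2$.

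On each connected component of the positive phase, either $f>0$ or $f\equiv 0$, by the strong maximum principle and the Hopf lemma. Suppose $f>0$. Repeating the computation with $\psi^2|\nabla\phi - \phi\nabla\log f|^2$ should give a stability-type inequality
\[
\int |D^2 v|^2\psi^2 + \int_{\del\{v>0\}} H^2\psi^2 \le C\int |D\psi|^2
\]
for $\psi$ compactly supported away from the singular set, with $\phi$ bounded above and below near the free boundary. For $n\ge 3$ the singular set of a cone has capacity zero, so the cutoff argument applies. One then chooses $\psi$ adapted to homogeneity, for example a logarithmic cutoff in $|x|$; this works because the weight $\int|D\psi|^2 \sim \int r^{-2}$ is integrated against a measure of growth $r^n$. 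The conclusion is $D^2 v \equiv 0$ on that component, so the component is a half-space solution $(x\cdot e)^+$.

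It then remains to show that the whole cone is linear, the analogue of Lemma \ref{lem:has-plane}. In the one-phase setting this is easier. A half-plane component of $\{v>0\}$ has free boundary a full hyperplane, and any other component must lie in the complementary closed half-space. A second component would therefore touch this hyperplane, and two-sided touching would contradict the free-boundary condition $|Dv|=1$ from both sides, or density estimates for $\{v=0\}$. Hence $v = (x\cdot e)^+$. If instead $f\equiv0$ on every component, then $v$ is $e_n$-invariant.

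\textbf{Step 3: rigidity.} If some blow-down $u_\infty$ is linear, then
\[
W(\infty) = \omega_n/2 = W(0^+),
\]
the latter because $0$ is a regular point. Monotonicity then forces $W$ to be constant, so $u$ is $1$-homogeneous, and Step 2 applied to $u$ itself, together with $u = u_\infty$, shows that $u$ is linear. Otherwise every blow-down is non-linear, and by Step 2 it is $e_n$-invariant; this is the claimed dichotomy. For the final claim, suppose $n\le k_*(0)$ and some blow-down $u_\infty(x) = w(x')$ is $e_n$-invariant. Then $w$ is a $1$-homogeneous minimizer in $\R^{n-1}$, and $n-1 < k_*(0)$, so $w$ is linear. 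This contradicts the non-linearity of the blow-down, so $u$ is linear.

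\textbf{Main obstacle.} The main obstacle is Step 2: obtaining the correct Jacobi-like identity for $\phi$ with matching boundary terms, so that the boundary integrals cancel exactly as they do in Lemma \ref{lem:cones}. A further difficulty is justifying the cutoff near singular free-boundary points and near the vertex, where $|Dv|$ may degenerate. I would work on the link $\{v>0\}\cap\bbS^{n-1}$ and use $\phi = |Dv|$ there.
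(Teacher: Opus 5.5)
Your overall scheme matches the paper's: Weiss reduction to a $1$-homogeneous blow-down, the Jacobi field $f=D_nv$, the quotient computation from Lemma \ref{lem:cones}, a capacity cutoff, and Weiss rigidity when a blow-down is linear. The genuine gap is how you handle several components of $\{v>0\}$. The paper needs no analogue of Lemma \ref{lem:has-plane}. It quotes \cite{EdSpVe} for connectedness of the link $\Omega_0=\{v>0\}\cap\bbS^{n-1}$, so $f>0$ on all of $\Omega_0$ and $D^2v\equiv 0$ globally.

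Your substitute fails at ``a second component would therefore touch this hyperplane.'' A conical component inside $\{x\cdot e\le 0\}$ need only meet $\{x\cdot e=0\}$ at the vertex. Even two-sided contact is not excluded by $|Dv|=1$ from both sides, since $|x\cdot e|$ has exactly that property. The zero-phase density estimate only rules out tangential contact at regular points. A self-contained repair goes through the first eigenvalue:
\begin{itemize}
\item If $\Omega_2$ is another component, then $v$ on $\Omega_2\cap\bbS^{n-1}$ is a positive Dirichlet eigenfunction with eigenvalue $n-1$, which is also the first eigenvalue of a hemisphere.
\item $\Omega_2\cap\bbS^{n-1}$ lies in the complementary hemisphere, so strict domain monotonicity forces it to be the whole hemisphere.
\item Then $v=|x\cdot e|$, whose zero set is null. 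This contradicts the positive density of $\{v=0\}$ at free-boundary points of a minimizer.
\end{itemize}
Alternatively, cite \cite{EdSpVe} as the paper does.

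Step 2 also needs three corrections.
\begin{itemize}
\item $|Dv|^2$ and $|Dv|$ are not interchangeable. By your own formulas $|Dv|^2$ has Robin coefficient $2H$ while $f$ has $H$. Only $\phi=|Dv|$ makes the boundary terms cancel, and then they cancel exactly, with no $\int H^2\psi^2$ term.
\item The interior input is not Bochner for $|Dv|^2$. It is $\phi\Delta\phi=|D^2v|^2-|\nabla|Dv||^2\ge\frac{1}{n-1}|D^2v|^2$, the refined Kato inequality for harmonic $1$-homogeneous $v$. Only $\phi\le 1$ is needed, and $\phi\ge f>0$ rules out degeneration.
\item A logarithmic cutoff in $|x|$ gains nothing in $\R^n$. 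By Hardy's inequality the radial Dirichlet energy dominates a fixed multiple of the homogeneous left-hand side; both scale like $R^{n-2}$. The computation must be done on the compact link, as you suggest at the end and as the paper does. There the only cutoff needed is around $\sing\del\Omega_0$, which has dimension $\le n-6$.
\end{itemize}
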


We emphasize that the last conclusion (linearity in low dimensions) was first proven by \cite{EnFeYu}.

\begin{proof}
By Weiss monotonicity it will suffice to assume $u$ is $1$-homogenous.  Write $\Omega = \{ u > 0 \}$, $\Omega_0 = \Omega \cap \bbS^{n-1}$.  Since the function $f = D_n u$ is a $0$-homogenous Jacobi field generating translation in the $e_n$ direction, $f$ satisfies
\[
\Delta f = 0 \text{ on } \Omega_0, \quad D_\eta f + H f = 0 \text{ on } \reg \del \Omega_0.
\]
Here $H$ is the mean curvature scalar of $\del\Omega_0$ with respect to the outwards normal $\eta$, chosen so that $H \leq 0$ (which holds since $u$ is entire).  By e.g. \cite{EdSpVe}, $\Omega_0$ is connected, and so since $f \geq 0$ by the maximum principle and the Hopf lemma we have either $f > 0$ or $f \equiv 0$ on $\Omega_0 \cup \reg\del\Omega_0$.

If $f \equiv 0$ then $u$ is invariant in the $e_n$ direction.  Suppose $f > 0$.  By direct computation, the function $\phi = |Du|$ satisfies\footnote{Strictly speaking, it may be more technically correct to take $\phi = \sqrt{|Du|^2 + \eps}$ for $\eps > 0$.  Then $\phi$ will be smooth and satisfy
\[
\phi \Delta \phi \geq \frac{1}{n-1} |D^2 u|^2 \text{ in } \Omega_0, \quad D_\eta \phi + (1+\eps)^{-1} H\phi = 0 \text{ on } \del\Omega_0.
\]
Using this $\phi$ instead of $|Du|$ in the computations of Lemma \ref{lem:cones}, and then taking $\eps \to 0$, will still give \eqref{eqn:bernoulli-1}.}
\[
\phi \Delta \phi \geq \frac{1}{n-1} |D^2 u|^2 \text{ on } \Omega_0, \quad D_\eta \phi + H \phi = 0 \text{ on } \reg \del\Omega_0.
\]
Since $u$ is an entire minimizer, $\phi \equiv |Du| \leq 1$.

By an essentially verbatim computation as in Lemma \ref{lem:cones}, for any $\psi$ supported away from $\sing\del\Omega_0$, we have
\begin{equation}\label{eqn:bernoulli-1}
\frac{1}{n-1} \int_{\Omega_0} \psi^2 |D^2 u|^2 \leq \int_{\Omega_0} \psi^2 \phi \Delta \phi \leq 2 \int_{\Omega_0} \phi^2 |\nabla \psi|^2 \leq 2 \int_{\Omega_0} |\nabla \psi|^2.
\end{equation}
Now $\Omega_0$ is an $(n-1)$-dimensional domain in $\bbS^{n-1}$ with $\dim(\sing \del\Omega_0) \leq n-6$, and so by a capacity argument as before we can choose a sequence of $\psi_i \to 0$ uniformly on compact subsets of $\Omega_0$, and with $\int_{\Omega_0} |\nabla \psi_i|^2 \to 0$.  We deduce $D^2 u = 0$, so $u$ is linear, and since $u$ is minimizing we must have $u(x) = \<x,a\>_+$ for some unit vector $a$ satisfying $\<a , e_n\> \geq 0$.
\end{proof}

\subsection{Connectedness}

Though not required for our main theorems, we point out here that the regular set of any entire minimizer $E \subset \R^{n+1}_+$ of $\cA^\theta$ is connected.  Of course if $n \leq k_*(\theta) - 1$ then $E$ is simply planar.

\begin{theorem}\label{thm:connected}
Take $\theta \in (0, \pi)$, let $E \subset \R^{n+1}_+$ be a set of locally-finite perimeter which minimizes $\cA^\theta$, and write $M = \del^* E \cap \R^{n+1}_+$.  If $M \neq \emptyset$, then $\reg M$ is connected and $\del \reg M \subset \del\R^{n+1}_+$ is non-empty.
\end{theorem}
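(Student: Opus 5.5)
The non-emptiness of $\del \reg M$ is already Item 3 of Lemma \ref{lem:basic}, so the content is connectedness. I would argue by contradiction. Suppose $\reg M$ has at least two connected components, and let $M_1$ be one of them and $M_2 = \reg M \setminus M_1$. Both are non-empty, relatively closed in $\R^{n+1}\setminus \sing M$, and (by Item 3) each component has non-empty boundary on $\del \R^{n+1}_+$.

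\emph{Step 1: splitting the minimizer.} Since $\dim \sing M \leq n-4$, the set $\sing M$ has zero $\haus^{n-1}$-measure. Removing it leaves $\R^{n+1}_+ \setminus \overline{M}$ with components adjacent to $M_1$ or $M_2$. Using this, I would decompose $E$ into two pieces. Set $E_1$ to be the region ``enclosed by'' $M_1$: the union of components of $\R^{n+1}_+\setminus \overline{M_1}$ on the $E$-side of $M_1$, intersected appropriately, so that $\del^*E_1 \cap \R^{n+1}_+ = M_1$ up to $\haus^n$-null sets. Concretely, take the indicator function $\chi_E$, and flip it on the complementary components cut out by $M_2$, producing a set $E_1$ with $\del^* E_1 \cap \R^{n+1}_+ = M_1$. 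I would then check that $E_1$ (and similarly $E_2$) is again a minimizer of $\cA^\theta$. This works because a competitor for $E_1$ can be glued back along $M_2$ to give a competitor for $E$ with the same energy difference, since the capillary functional is additive over disjoint pieces of boundary. This step needs care near $\del \R^{n+1}_+$, where the wetted region $\del^*E\cap\del\R^{n+1}_+$ must be split consistently with the boundaries $\del M_1, \del M_2$.

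\emph{Step 2: blow-down.} Take a tangent cone at infinity along a common sequence $r_i\to\infty$, giving $E_1/r_i \to E_1'$, $E_2/r_i\to E_2'$, and $E/r_i \to E'$. By the Hausdorff convergence stated in the introduction, each $\del^* E_j'\cap \R^{n+1}_+$ is non-empty. The supports of $M_1$ and $M_2$ are disjoint in $\R^{n+1}_+$, and the rescalings stay ``nested'' in the sense that one lies in the closure of one side of the other. Hence the cones $M_1', M_2'$ lie on one side of each other, and both are minimizing cones.

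\emph{Step 3: maximum principle on cones.} Two minimizing capillary cones, one lying on one side of the other and both non-empty, must touch, at least at the origin and then along rays, by conicality. The strong maximum principle of \cite{SoWh} applies at interior touching points, and a Hopf-type boundary argument as in Lemma \ref{lem:has-plane} applies at boundary touching points. Together these force $M_1' $ and $M_2'$ to share a component, so $\del[E/r_i] \llcorner\R^{n+1}_+$ converges with multiplicity two on that component. That contradicts the multiplicity-one convergence $|\del[E/r_i]\llcorner\R^{n+1}_+|\to|\del[E']\llcorner \R^{n+1}_+|$ of a minimizer.

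I expect the main obstacle to be Step 3. The touching may occur only at the vertex or in the singular set, where the maximum principle is not directly available. To handle this I would first try a sweep-out argument as in the proof of Lemma \ref{lem:has-plane}, rotating or dilating one cone until first contact away from the origin. If that fails, I would instead try a Frankel-type argument in $\bbS^n_+$ using the stability inequality, the capacity cutoff of Lemma \ref{lem:cones}, and a suitable rigid test vector field.
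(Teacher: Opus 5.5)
Your blow-down step is essentially the reduction the paper makes at the end of its proof. However, Step 3 has a genuine gap, and it sits exactly where the content of the theorem lies.

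\textbf{The gap in Step 3.} The claim that two one-sided minimizing capillary cones must touch ``along rays'' is not justified. Conicality only spreads a contact point $x \neq 0$ along $\R_+ x$. You never produce such an $x$, let alone one in $\reg M_1' \cap \reg M_2'$ where \cite{SoWh} or a Hopf lemma could be applied.

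On the links this is a Frankel-type property for minimal hypersurfaces of $\bbS^n_+$, and Frankel's theorem fails in the hemisphere. Two totally geodesic hemispheres meeting along a great $(n-2)$-sphere contained in $\del\bbS^n_+$ are disjoint in the open hemisphere. Indeed, \cite{NaffZhu} only guarantees intersection in the closed hemisphere. So in the capillary problem the two pieces could a priori meet only at the vertex, or at singular points on $\del\R^{n+1}_+$. That is precisely the scenario that has to be ruled out.

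Your fallbacks do not handle it:
\begin{itemize}
\item Dilations act trivially on cones.
\item Rotations of a non-planar cone do not give an ordered family with a meaningful first contact, unlike the planes in Lemma \ref{lem:has-plane}.
\item A Frankel argument in $\bbS^n_+$ would need control of boundary terms in the second variation and of the singular set, and you specify neither.
\end{itemize}

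\textbf{The missing idea.} What is needed is a stability rigidity rather than a maximum principle, applied first to cones. For conical $E$ with two components $M_1, M_2$ of $\reg M \cap \bbS^n$, the paper takes $\Gamma = \del M_1$. This splits $\del\bbS^n_+$ into $S_\pm$ with $\del M_2 \subset S_+$, and one arranges that $\overline{E}$ agrees with $\overline{S_+}$ near $\Gamma$. The paper then minimizes mass among currents $\Sigma$ with $\del\Sigma = [\Gamma]$ and $\spt\Sigma \subset \overline{E}\cap\bbS^n$. There are two cases.
\begin{itemize}
\item If $\spt\Sigma$ enters the open hemisphere, then either it contains a component of $\reg M$ (by \cite{SoWh}), which is one-sided and hence fully stable with fixed boundary, or it is itself a stable minimal hypersurface. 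Either way \cite[Proposition 14]{NaffZhu} makes it totally geodesic, and Lemma \ref{lem:has-plane} then forces the capillary half-plane, contradicting disconnectedness.
\item If $\spt\Sigma \subset \del\bbS^n_+$, the constancy theorem and minimality give $\spt\Sigma = \overline{S_+}$. This contradicts $\del M_2 \subset S_+$, because $\overline{E}$ lies on one side of $\del M_2$ within the equator.
\end{itemize}

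\textbf{The general case.} With the conical case in hand, the blow-down needs no splitting of $E$. Each component $M_j$ of $\reg M$ gives a nonzero limit $|M_j'| \le |M'|$ that is stationary in $\R^{n+1}_+$. By the constancy theorem and connectedness of $\reg M'$, each such limit equals $|M'|$, so two components would give multiplicity two. Your Step 1 can therefore be dropped. As written it is also incomplete: a competitor for $E_1$ may cross $M_2$, so the gluing would need a submodularity argument for $\cA^\theta$ and a consistent division of the wetted region.
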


We remark that \cite{NaffZhu} proved a connectedness theorem for smooth hypersurfaces in the half-sphere $\bbS^n_+$ without any boundary conditions in $\del \bbS^n_+$, which says that any two such surfaces must either coincide or intersect in $\del \bbS^n_+$.  The proof of Theorem \ref{thm:connected} will more or less follow \cite{NaffZhu}, but with some modifications in the capillary minimizing setting to rule out different connected components of $M$ meeting at a (possibly singular) boundary point. A similar argument in the smooth setting should show the following improved version of \cite{NaffZhu} for properly embedded hypersurfaces with boundary $\Sigma_1,\Sigma_2$ in the hemisphere $\mathbb{S}^n_+$: Either $\Sigma_i$ are both totally geodesic, or they intersect in the interior of the hemisphere.

\begin{proof}
The key insight is \cite[Proposition 14]{NaffZhu} which characterizes any (possibly singular) stable minimal hypersurface $\Sigma$ in $\bbS^n_+$ (with fixed boundary in $\del \bbS^n_+$) as planar. 

First assume that $E$ is dilation-invariant.  Since the theorem is trivially true when $n \leq 3$ by monotonicity and the classification of cones \cite{ChEdLi}, let us assume $n \geq 4$.  Suppose there are two connected components $M_1, M_2 \subset \reg M \cap \bbS^n$. Recall that our convention is that each manifold boundary $\partial M_i \subset M_i$. 
Write $\Gamma = \del M_1 \subset \del \bbS^n_+$, and note by Lemma \ref{lem:basic} that $\Gamma$ is a smooth, oriented $(n-2)$-manifold satisfying
\begin{equation}\label{eqn:connect-1}
\dim(\overline{\Gamma}\setminus \Gamma) \leq n-5, \quad \haus^{n-2}(\Gamma) \leq c(n, \theta).
\end{equation}
So $\overline{\Gamma}$ divides $\del \bbS^n_+$ into two open connected components $S_\pm$.  Since both $\del M_1, \del M_2$ must be non-empty and disjoint, we can assume that $\del M_2 \subset S_+$.  Moreover, after replacing $E$ with $\R^{n+1}_+ \setminus E$ as necessary we can assume $\overline{E} \cap \del \bbS^n_+$ coincides with $\overline{S_+}$ in a neighborhood of $\Gamma$.

From \eqref{eqn:connect-1} $[\Gamma]$ defines an integral $(n-2)$-current in $\del \bbS^n_+$ with zero boundary.  Let $\Sigma$ be a choice of $(n-1)$-current which achives the infimum
\[
\inf \{ ||\Sigma||(\overline{\bbS^n_+}) : \del \Sigma = [\Gamma], \spt \Sigma \subset \overline{E} \cap \bbS^n \} 
\]
By standard compactness results for currents, some $\Sigma$ achieving the above infimum will exist.

If $\spt \Sigma \cap \bbS^n_+$ meets some connected component $M_3 \subset \reg M \cap \bbS^n_+$, then by the maximum principle \cite[Remark 2]{SoWh} we have $M_3 \subset \spt \Sigma$.  But now $\Sigma$ is stable for deformations staying inside $\overline{E}$, and hence $M_3$ is one-sided stable (and therefore stable) 
in $\bbS^n_+$ for deformations fixing $\del \bbS^n_+$.  From \cite{NaffZhu} we get that $M_3$ is planar, and so by Lemma \ref{lem:has-plane} $\reg M$ is connected, a contradiction.

Conversely, if $\spt \Sigma \cap \bbS^n_+$ is non-empty and disjoint from $\reg M \cap \bbS^n_+$, then $\spt \Sigma \cap \bbS^n_+$ is a smooth, stable minimal surface in $\bbS^n_+$ away from an $(n-8)$-dimensional singular set, and we can again apply \cite{NaffZhu} to get that $\spt \Sigma$ is planar, and hence by Lemma \ref{lem:has-plane} $\reg M$ is connected, which is again a contradiction.

So the only remaining possibility is $\spt \Sigma \cap \bbS^n_+ = \emptyset$, i.e. $\spt \Sigma \subset \del \bbS^n_+$.  Since $\del \Sigma = [\Gamma]$, the constancy theorem implies $\Sigma = n_+[S_+] + n_- [S_-]$ for $n_\pm \in \mathbb{Z}$ satisfying $|n_+ - n_-|= 1$.  By minimality of $||\Sigma||$ we deduce $\spt \Sigma = \overline{S_\pm} \subset \overline{E}$, and by our choice of $E$ we must have $\spt \Sigma = \overline{S_+} \subset \overline{E} \cap \bbS^n_+$.  But this contradicts the fact that $S_+ \supset \del M_2 \neq \emptyset$, since $S_+$ is relatively open in $\del \bbS^n_+$ while $\overline{E}$ locally lies to one side of $\del M_2$ in $\del \bbS^n_+$. 

This proves Theorem \ref{thm:connected} for conical $E$.  For general $E$, let $E/r_i \to E'$ be any tangent cone of $E$ at infinity, and write $M' = \del^* E' \llcorner \R^{n+1}_+$.  Since $\del [E/r_i] \llcorner \R^{n+1}_+ \to \del [E'] \llcorner \R^{n+1}_+$ as currents, varifolds, and in the local Hausdorff distance, we deduce that if $M_1 \subset \reg M$ satisfies $|M_1| \neq 0$, then $|M_1/r_i| \to |M_1'| \neq 0$ for some subset $M_1' \subset M'$.  Therefore, if $M_1$ is any connected component of $\reg M$, then $|M_1/r_i| \to |M_1'|$ for $|M_1'|$ being stationary in $\R^{n+1}_+$, and hence by the constancy theorem and connectedness of $\reg M'$ we get $|M_1/r_i| \to |M_1'| = |\reg M'| \equiv |M'|$.  If there were two connected components $M_1, M_2 \subset \reg M$, then we would have $|M/r_i| \geq |M_1/r_i| + |M_2/r_i| \to 2 |M'|$, which is a contradiction.
\end{proof}


\begin{remark}
Theorem \ref{thm:connected} implies that $E$ is indecomposable, in the sense that if $E = E_1 \cup E_2$ for disjoint sets $E_1, E_2$, and if $||\del [E]|| = ||\del[E_1]|| + ||\del[E_2]||$, then necessarily either $[E_1] = 0$ or $[E_2] = 0$.  This follows because if $E = E_1 \cup E_2$ as above, and if $E_1, E_2 \neq \emptyset$, then by the maximum principle the regular set of $E$ would necessarily be disconnected.

Moreover, following \cite{BoGi}, indecomposability implies the Neumann-Poincare-type inequality: Take $\theta \in (0, \pi)$, let $E \subset \R^{n+1}_+ \cap B_1$ be a set of locally-finite perimeter minimizing $\cA^\theta$ in $B_1$, and write $M = \del^* E \cap \R^{n+1}_+$.  Then there is a constant $\beta(n, \theta)$ so that for any $f \in C^1(B_1)$ we have the inequality
\[
\min_{k \in \R} \left( \int_{M \cap B_\beta} |f - k|^{\frac{n}{n-1}} d\haus^n \right)^{\frac{n-1}{n}} \leq \beta^{-1} \int_{M \cap B_1} |\nabla f| d\haus^n.
\]
\end{remark}

\subsection{Other graphicality conditions}\label{sec:other-graph}

We consider here capillary minimal hypersurfaces in a half-space which are graphical over their barrier plane, rather than a half-plane orthogonal to the barrier as in Theorem \ref{thm:main1}.  This is the direct ``non-linear'' analogue of the one-phase Bernoulli problem, and in fact for capillary minimizing hypersurfaces \cite{ChEdLi} showed the rescaled graphing function well-approximates a minimizer of Alt-Caffarelli for small angles.

Although this kind of graphicality does not imply minimizing by itself, we prove an capillary version of the main result in \cite{EnFeYu}, which says that when a capillary minimal hypersurface is already graphical over the barrier, then graphicality of the free-boundary over an orthogonal plane propagates to graphicality of the entire surface (over the same orthogonal plane).

In this section we identify $\R^n = \{ x_1 = 0\}$ and $\R^{n-1} = \{ x_1 = x_{n+1} = 0\}$, so that geometrically $\{ x_1 = 0 \}$ remains the barrier hyperplane.
\begin{theorem}\label{eqn:graph-to-graph}
Let $\theta \in (0, \pi/2)$, and let $v(x_2, \ldots, x_{n+1}) : \R^n \to \R$ be a non-negative, smooth in $\{ v > 0 \}$, $C^1$ up to the boundary $\del \{v > 0 \}$, entire solution the free-boundary capillary minimal surface equation:
\begin{equation}\label{eqn:graph-to-graph-hyp1}
\sum_{i=2}^{n+1} D_i \left( \frac{D_i v}{\sqrt{1 + |Dv|^2}} \right) = 0 \text{ on } \{ v > 0 \}, \quad \frac{1}{\sqrt{1+|Dv|^2}} = \cos\theta \text{ on } \del \{ v > 0 \}.
\end{equation}
Suppose that $\del \{ v > 0 \}$ is an entire graph of over $\R^{n-1}$, i.e. 
\begin{equation}\label{eqn:graph-to-graph-hyp2}
\{ v > 0 \} = \{ (x', x_{n+1}) :   x_{n+1}>u(x'), x' \in \R^{n-1} \} , \quad  u : \R^{n-1} \to \R.
\end{equation}

Then $\graph(v) := \{ (v(x), x) : x \in \overline{\{ v > 0 \}} \}$ can be rewritten as a smooth, entire graph over the half-plane $\{ x_1 \geq 0, x_{n+1} = 0 \}$ as in the hypotheses of Theorem \ref{thm:main1}.  In particular, if $n \leq k_*(\theta)$, then $v$ is linear.
\end{theorem}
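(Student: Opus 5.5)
Write $\Sigma = \graph(v)$, a smooth hypersurface-with-boundary in $\overline{\R^{n+1}_+}$ meeting $\{x_1=0\}$ at angle $\theta$ along $\del\Sigma = \{(0,x',u(x'))\}$. Let $\nu$ be the unit normal of $\Sigma$ chosen so that, at the boundary, $\<\nu, e_{n+1}\>\ge 0$. The plan is to show that $w := \<\nu, e_{n+1}\> > 0$ on all of $\Sigma$. Once this holds, $\Sigma$ is locally a graph over $\{x_{n+1}=0\}$. Properness and the graphicality of $v$ over $\R^n$ then upgrade this to an entire graph over the half-plane $\{x_1\ge 0, x_{n+1}=0\}$. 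Indeed, each vertical line $\{(x_1,x'',t)\}$ with $x_1\ge0$ meets $\Sigma$ exactly once: existence holds because for fixed $x''$ the function $t\mapsto v(x'',t)$ runs from $0$ at $t=u$ to $\infty$ (using $v>0$ above $u$ and the monotonicity below). The capillary equation \eqref{eqn:main1-hyp} is then just a rewriting of \eqref{eqn:graph-to-graph-hyp1}, since both say $\Sigma$ is minimal with contact angle $\theta$. Theorem~\ref{thm:main1} then gives the linearity conclusion when $n\le k_*(\theta)$.

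The first step is the boundary sign. Since $\{v>0\}$ is the supergraph of $u$, the inward conormal of $\del\{v>0\}$ in $\R^n$ has positive $e_{n+1}$ component. The normal of $\graph(v)$ at a boundary point is a combination of $e_1$ and this conormal, with coefficients fixed by $\theta\in(0,\pi/2)$, so $w>0$ on $\del\Sigma$. Along $\del\Sigma$, $w$ also satisfies the Jacobi Robin condition $D_\eta w = q w$ with $q=\cot\theta A(\eta,\eta)$, because translation in $e_{n+1}$ preserves the capillary problem. In the interior, $\Delta w+|A|^2w=0$.

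The main step is an interior positivity argument, which I would run as a continuity/sliding argument in the style of \cite{EnFeYu}. Translate $\Sigma$ vertically: $\Sigma_t = \Sigma + t e_{n+1}$. I aim to show $\Sigma_t$ lies strictly on one side of $\Sigma$ for every $t>0$, namely $\Sigma_t\cap\Sigma=\emptyset$. This side is $\{x_1 < v\}$-type, i.e.\ inside the region $\{(x_1,x): 0\le x_1 \le v(x)\}$ shifted up. For large $t$ I would try to deduce this from graphicality over the barrier. Both $\Sigma$ and $\Sigma_t$ are graphs over $\R^n$, of $v$ and $v_t(x)=v(x-te_{n+1})$, so the claim is exactly $v_t\le v$, i.e.\ $v$ is nondecreasing in $x_{n+1}$. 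The supports are nested, $\{v_t>0\}\subset\{v>0\}$, by the graphicality of the free boundary. I would prove $v_t \le v$ by a comparison principle for the capillary minimal surface equation on nested supergraph domains. Let $t_0=\inf\{t: v_s\le v \ \forall s\ge t\}$. If $t_0>0$, there is a first touching, either at an interior point (excluded by the strong maximum principle for the minimal surface equation) or at a free-boundary point. At a free-boundary point both functions have gradient norm $\tan\theta$, and nested supports plus Hopf force a contradiction, or touching at infinity, handled below.

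The obstacle is the lack of compactness. Since $\Sigma$ is entire, the infimum may be approached at points escaping to infinity. I would handle this by recentering at the near-touching points and passing to limits: translated copies of $v$ (by horizontal translations only, keeping the boundary condition) converge, via the curvature estimates for capillary minimizers or the graph structure, to another solution of the same type with an actual touching, giving a contradiction. To start the continuity at large $t$ one needs a growth comparison, which is also delicate. The fallback is to argue directly with $w$: write $w=w_+-w_-$ and test the Jacobi equation with $w_-\psi^2$. Because $w_->0$ only on a region whose boundary avoids $\del\Sigma$, there are no boundary terms, and one gets stability-type integrals. These would need the cutoff/capacity control used in Lemma~\ref{lem:cones}, which I expect to require the minimizing property that I do not yet have. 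Hence the sliding argument is my primary route, and the limiting step at infinity is the crux. Once $v_t\le v$, strictness and then $w>0$ follow from the strong maximum principle applied to the nonnegative Jacobi field $w$, which is not identically zero by the boundary sign.
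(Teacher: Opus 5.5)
Your proposal leaves both substantive steps of the paper's proof open.

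\textbf{Monotonicity.} The interior monotonicity $D_{n+1}v\ge 0$ (your $w\ge 0$) is never established, and the sliding scheme fails on two counts.
\begin{enumerate}
\item It has no starting configuration. The inequality $v(\cdot-te_{n+1})\le v$ for large $t$ does not follow from nested supports; it is essentially the conclusion.
\item The ``touching at infinity'' step cannot give a contradiction. Suppose the near-touching points $x_k$ have $v(x_k)\to\infty$. Grant the gradient bound $|Dv|\le\tan\theta$ of Remark~\ref{rem:gradient-estimate}, which you would need for any compactness since you have no minimizing property yet. Then $v(x_k+\cdot)-v(x_k)$ subconverges to an entire minimal graph with bounded gradient, hence to an affine $\ell$. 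Likewise $v(x_k-t_0e_{n+1}+\cdot)-v(x_k)\to\ell(\cdot-t_0e_{n+1})$. Touching in the limit only forces $D_{n+1}\ell=0$, so the two limits coincide and the strong maximum principle gives nothing.
\end{enumerate}
The idea you are missing is to use graphicality over the barrier. The function $\<\nu,e_1\>$ is a positive Jacobi field, and dividing the $e_{n+1}$-Jacobi field by it gives $g=-D_{n+1}v$. This $g$ solves the potential-free equation $\cL_W g=0$, so it admits a maximum principle. The paper handles non-compactness with the weight $\eta=e^{-a(\sqrt{1+|x-p|^2}-1)-bx_1}$. It uses two facts: on $\{g>\gamma\}$ one has $\<\nu,e_1\>^2<(1+\gamma^2)^{-1}$, so $|\nabla x_1|$ is bounded below there; and $G=g(\eta-\delta)$ is negative on $\{g>\gamma\}$ outside a compact set. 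If you adopt this, note that the displayed formula for $\cL_W(G)$ omits the terms $2\<\nabla g,\nabla\eta\>-2g\<\nabla\eta,\nabla\log W\>$. These also have to be controlled at the maximum point.

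Strict positivity then comes from the strong maximum principle plus the Hopf lemma with the Robin condition. On $\del\Sigma$ you only know $w\ge 0$, not $w>0$, because the $C^1$ boundary of a supergraph may have a horizontal normal.

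\textbf{Entireness.} Local graphicality and properness do not give an entire graph over $\{x_1\ge0,\ x_{n+1}=0\}$. Your claim that $t\mapsto v(x'',t)$ tends to $\infty$ is unjustified. Monotonicity only gives a limit $L(x'')\in(0,\infty]$, and the projection of $\Sigma$ is then $\{0\le x_1<L(x'')\}$. This is compatible with properness: the height $x_{n+1}$ over the half-plane just blows up as $x_1\uparrow L(x'')$. Ruling out $L<\infty$ is where the paper uses the variational structure.
\begin{enumerate}
\item Once $\Sigma$ is an $e_{n+1}$-graph, the calibration argument from the proof of Theorem~\ref{thm:main1} shows $E=\{0<x_1<v\}$ is $\cA^\theta$-minimizing among $E'\subset E$.
\item The non-degeneracy estimate \cite[Lemma 4.7]{ChEdLi} then gives $\sup_{B_{R/2}(\tau e_{n+1})}v\ge\eps R$ whenever $B_R(\tau e_{n+1})\subset\{v>0\}$.
\item The gradient bound $|Dv|\le\tan\theta$ makes the equation uniformly elliptic, so Harnack upgrades this to $\inf_{B_{R/2}(\tau e_{n+1})}v\ge R/c(n,\theta)$.
\end{enumerate}
Together with monotonicity in $x_{n+1}$, this puts arbitrarily large boxes into the projection.
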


\begin{proof}
Our proof is inspired by \cite{CoMaRi}.  Let $M = \graph(v) \subset \overline{\R^{n+1}_+}$, and let $\nu$ be the unit normal of $M$ in the postive $e_1$ direction.  We first note that since $\del \{ v > 0 \}$ is $C^1$, the function $u$ is continuous.  We second note that hypothesis \eqref{eqn:graph-to-graph-hyp2} implies $\{ v > 0 \}$ (and hence $M$) is connected, and $D_{n+1} v \geq 0$ on $\del \{ v > 0 \}$.

Define
\begin{equation*}
g = \frac{\< \nu , e_{n+1}\>}{ \< \nu, e_{1}\>} \equiv-D_{n+1} v, \quad W = \frac{1}{\<\nu , e_{1}\>} \equiv \sqrt{1 + |Dv|^2},
\end{equation*}
viewed as functions on $M$.  Define the operator $\cL_W(f) = \Delta f - 2\<\nabla f, \nabla \log W\>$, where $\Delta, \nabla$ denote the Laplacian, gradient on $M$.  Since for any fixed vector $e$,  $\<\nu , e\>$ is a Jacobi field on $M$ away from $\del M$, the functions $f, W$ solve
\begin{equation}\label{eqn:graph-to-graph-1}
\cL_W(g) = 0, \quad \cL_W(W) = |A|^2 W,
\end{equation}
where $A$ is the second fundamental form of $M$

By assumption $g \leq 0$ on $\del M$.  Suppose, towards a contradiction, $g(p) > \gamma > 0$ somewhere.  Define $K_\gamma = \{ x \in M  : g(x) > \gamma \}$, then $K_\gamma$ is relatively open in $M$ and at locally-positive distance away from $\del M$.  Moreover, since $W \geq g$ we have $W > \gamma$ on $K_\gamma$.

Fix $b, \delta > 0$ sufficiently small so that
\begin{equation}\label{eqn:graph-to-graph-2}
g(p)(e^{-b p_{1}} - \delta) > \gamma,
\end{equation}
and then fix $a > 0$ small so that $an + a^2 < b^2(1-(1+\gamma^2)^{-1})/2$.  Define the functions
\[
\eta(x) = e^{-a(\sqrt{1+|x - p|^2} - 1) - b x_{1}}, \quad G(x) = g(x) (\eta(x) - \delta).
\]

Using that
\[
|\nabla |x - p|^2|^2 = 4 |\pi_{T_xM}(x - p)|^2 \leq 4 |x - p|^2, \quad \Delta |x - p|^2 = 2n,
\]
it holds on $K_\gamma$ that
\begin{align*}
\Delta \eta 
&= \eta (-a \Delta \sqrt{1+|x - p|^2} + |\nabla (a \sqrt{1+|x - p|^2} + b x_{1})|^2) \\
&\geq (-an + |\nabla(a \sqrt{ 1+ |x - p|^2} + b x_{1})|^2) \\
&\geq \eta (-an - a^2|\nabla \sqrt{1+|x - p|^2}|^2 + \frac{b^2}{2} |\nabla x_{1}|^2 )\\
&\geq \eta (-an - a^2 + \frac{b^2}{2}(1 - \<\nu, e_{1}\>^2)) \\
&\geq \eta (-an - a^2 + \frac{b^2}{2}(1 - (1+\gamma^2)^{-1})) \\
&> 0
\end{align*}

Now since $\eta - \delta \leq 1$, and $\eta - \delta < 0$ outside some ambient ball $B_R$, we have $\gamma = g \geq G$ on $\del K_\gamma$ and $G < 0$ in $K_\gamma \setminus B_R$.  From our choice \eqref{eqn:graph-to-graph-2}, $G(p) > \gamma$, and therefore $G$ attains a positive maximum $> \gamma$ somewhere in $K_\gamma$.  On the other hand, we compute
\[
\cL_W(G) = \cL_W(g) (\eta - \delta) + g \Delta \eta > 0 \text{ in } K_\gamma,
\]
which precludes $G$ attaining any maximum in $K_\gamma$.  This is a contradiction, and so we must have $g \leq 0$ on $M$.  We remark that the same argument applied to $-g$ shows that $D_{n+1} v \leq \max_{\del \{ u > 0\}} D_{n+1} v$.

If $g(p) = 0$ at some $p \in M \setminus \del M$, then from \eqref{eqn:graph-to-graph-1} we can apply the strict maximum principle to deduce $g \equiv 0$ on $M$, and hence $D_{n+1} v \equiv 0$ and $v$ is invariant in the $e_{n+1}$ direction.  However, this contradicts our graphical hypothesis \eqref{eqn:graph-to-graph-hyp2}.

We must have $g < 0$ on $M \setminus \del M$.  The function $f = \< \nu, e_{n+1}\> \equiv g W^{-1}$ is the Jacobi field on $M$ generating translations in the $e_{n+1}$ direction, and so $f$ satisfies
\[
\Delta f + |A|^2 f = 0 \text{ on } M, \quad D_\eta f = \cos\theta A(\eta, \eta) f \text{ on } \del M.
\]
Since $f < 0$ on $M \setminus \del M$, the Hopf maximum principle implies $f < 0$ on $\del M$ also.  We deduce that $\< \nu, e_{n+1} \> < 0$ on $M$, and hence $M$ is a smooth graph in the $e_{n+1}$ direction over its projection $\pi_{n+1}(M)$, where $\pi_{n+1} : \R^{n+1} \to \{ x_{n+1} = 0 \}$ denotes the orthogonal projection map.

By the same calibration argument as in the proof of Theorem \ref{thm:main1}, $E := \{ 0 < x_1 < v(x_2, \ldots, x_{n+1}) \}$ minimizes the capillary functional $\cA^\theta$ among domains in the cylinder $\pi_{n+1}^{-1}(\pi_{n+1}(M))$, and in particular among domains below $E$:
\begin{equation}\label{eqn:gtog-1}
\cA^\theta(E) \leq \cA^\theta(E') \quad \forall E' \subset E.
\end{equation}
Given any radius $R$, then by our hypothesis \eqref{eqn:graph-to-graph-hyp1} we have $B_R(\tau e_{n+1}) \subset \{ v > 0 \}$ for $\tau \gg 0$.  From \eqref{eqn:gtog-1}, we can apply the non-degeneracy estimate of \cite[Lemma 4.7]{ChEdLi} to deduce
\[
\sup_{B_{R/2}(\tau e_{n+1})} v \geq \eps R
\]
for some $\eps(n, \theta) > 0$.  By the gradient estimate of \cite{CoMaRi} (see also Remark \ref{rem:gradient-estimate}), the minimal surface equation \eqref{eqn:graph-to-graph-hyp1} becomes uniformly elliptic, and we can apply Harnack to deduce $\inf_{B_{R/2}(\tau e_{n+1})} v \geq R/c(n, \theta)$.  Using again that $\{ v > 0 \}$ is the upper level set of $v$, we deduce
\[
\pi_{n+1}(M) \supset \{ (x_1, x') : 0 \leq x_{1} \leq R/c(n, \theta), |x'| \leq R/2  \}.
\]
Taking $R \to \infty$ we get $\pi_{n+1}(M) = \{ x_1 \geq 0, x_{n+1} = 0 \}$, implying that $M$ is an entire graph over the orthogonal half-plane $\{ x_1 \geq 0, x_{n+1} = 0 \}$.
\end{proof}

\begin{remark}\label{rem:gradient-estimate}
As originally contained in \cite{CoMaRi} in a more complicated setting, the same maximum principle argument with $W$ in place of $g$ and $1-\gamma^{-2}$ in place of $1-(1+\gamma^2)^{-1}$ proves the following sharp gradient estimate: if $v(x_2, \ldots, x_{n+1}) : \R^n \to \R$ is smooth on $\{ v > 0 \}$ and $C^1$ up to $\del \{ v > 0 \}$, and solves
\[
\sum_{i=2}^{n+1} D_i \left( \frac{D_i v}{\sqrt{1+ |Dv|^2}} \right) = 0 \text{ on } \{ v > 0 \}, 
\]
then
\[
\sup_{\{ u > 0 \}} |Dv| \leq \sup_{\del\{ v > 0 \}} |Dv|.
\]
\end{remark}

\bibliography{bib}
\bibliographystyle{amsplain}

\end{document}